
\documentclass[12pt]{amsart}
\usepackage{fullpage, url,epsfig}
\usepackage[backref]{hyperref}
\usepackage{amssymb}
\usepackage[normalem]{ulem}
\usepackage[all]{xy} 
\usepackage{enumerate}


\DeclareFontEncoding{OT2}{}{} 


\usepackage[usenames,dvipsnames]{color}


\newcommand{\C}{{\mathbb C}}
\newcommand{\F}{{\mathbb F}}
\newcommand{\G}{{\mathbb G}}

\newcommand{\PP}{{\mathbb P}}
\newcommand{\Q}{{\mathbb Q}}
\newcommand{\R}{{\mathbb R}}
\newcommand{\Z}{{\mathbb Z}}
\newcommand{\Qbar}{{\overline{\Q}}}

\newcommand{\Xbar}{{\overline{X}}}
\newcommand{\kbar}{{\overline{k}}}

\newcommand{\Pbar}{{\overline{P}}}

\newcommand{\Fbar}{{\overline{\F}}}

\newcommand{\calA}{{\mathcal A}}
\newcommand{\calB}{{\mathcal B}}

\newcommand{\calO}{{\mathcal O}}


\DeclareMathOperator{\inv}{inv}

\DeclareMathOperator{\im}{im}

\DeclareMathOperator{\Gal}{Gal}

\DeclareMathOperator{\Br}{Br}

\DeclareMathOperator{\Div}{Div}
\DeclareMathOperator{\Pic}{Pic}

\DeclareMathOperator{\Proj}{Proj}

\DeclareMathOperator{\res}{res}

\DeclareMathOperator{\NS}{NS}



\newcommand{\et}{{\operatorname{et}}}

\newcommand{\isom}{\simeq}


\newtheorem{theorem}{Theorem}[section]
\newtheorem{lemma}[theorem]{Lemma}

\theoremstyle{definition}

\newtheorem{remark}[theorem]{Remark}

\begin{document}

\title[paper]{Brauer--Manin obstructions on degree 2 K3 surfaces}
\author{Patrick Corn and Masahiro Nakahara}
\address{Department of Mathematics and Computer Science, Emory University, 400 Dowman 
Dr. W401, Atlanta, GA, 30322}
\email{pcorn@mathcs.emory.edu}
\urladdr{http://www.mathcs.emory.edu/~pcorn}

\address{Department of Mathematics, Rice University, 6100 Main St, Houston, TX, 77005}
\email{mn24@math.rice.edu}

\subjclass[2010]{14G05, 11G35, 14F22}
\keywords{Rational points, Hasse principle, Brauer--Manin obstruction}

\begin{abstract}
We analyze the Brauer--Manin obstruction to rational points on the K3 surfaces over $\Q$ given by 
double covers of $\PP^2$ ramified over a diagonal sextic. After finding an explicit set of 
generators for the geometric Picard group of such a surface, we find two types of infinite 
families of counterexamples to the Hasse principle explained by the algebraic Brauer--Manin obstruction. 
The first type of obstruction comes from a quaternion algebra, and the second type comes from a 
3-torsion element of the Brauer group, which gives an affirmative answer to a question asked by Ieronymou
and Skorobogatov.
\end{abstract}

\maketitle

\section{Introduction}\label{S:introduction}

A variety $X$ over a number field $k$ with points in every completion $k_v$ of $k$ but no $k$-rational points is said to be a counterexample to the Hasse principle. In 1971, Manin \cite{manin1971} identified an obstruction to the existence of rational points using the group $\Br(X):=H^2_{\et}(X,\G_m)_{\text{tors}}$, now known as the {\em Brauer--Manin obstruction}, which he used successfully to explain some of the counterexamples to the Hasse principle that were known at the time. Since then, groundbreaking work of Skorobogatov \cite{skorobogatovbeyond} has shown that the Brauer--Manin obstruction is not the only one for general varieties over number fields (see also \cite{poonen10}, \cite{enriques} ,\cite{ctps}, \cite{smeets}) but Colliot-Th\'el\`ene conjectured \cite{ct2003} that the Brauer--Manin obstruction is the only obstruction to rational points for smooth, projective, geometrically rationally connected varieties; that is, if $X$ is such a variety with points everywhere locally, and there is no Brauer--Manin obstruction to rational points on $X$, then $X$ should have a rational point.

\subsection{Computational evidence} When $X$ is a Del Pezzo surface, in addition to the promising theoretical evidence in favor of the conjecture, there is a growing amount of computational evidence for this conjecture in various cases (e.g. \cite{ctksdiag}, \cite{thesis}, \cite{corndp2}, and \cite{logandp2}). Computing the Brauer--Manin obstruction on a Del Pezzo surface is relatively fast in practice.

For K3 surfaces, the situation is far more unsettled. Skorobogatov has conjectured that the Brauer--Manin obstruction is the only obstruction to both Hasse principle and weak approximation for K3 surfaces over number fields \cite{skoOb}. In light of recent work on diagonal quartics (\cite{bright}, \cite{is2015}) and various Kummer surfaces (\cite{ronaldadam}, \cite{cornK3}, \cite{argentinthesis}, \cite{sz2016}), it seems natural to analyze one of the other main types of K3 surfaces, namely double covers of $\PP^2$ branched over a sextic curve, which are degree 2 K3 surfaces and were also studied in \cite{hvav11}, \cite{hv13}, \cite{mstva}. In this paper, we study the geometry of one of the simplest such families of surfaces:
\begin{equation}\label{k3eq}
X\colon \qquad w^2=Ax^6+By^6+Cz^6
\end{equation}
contained in the weighted projective space $\PP[1,1,1,3]=\Proj k[x,y,z,w]$. The curve defined by $Ax^6+By^6+Cz^6$ on $\PP^2$ is the ramification sextic. We study conditions on the coefficients of $X$ that are sufficient in order to give rise to an obstruction.

\subsection{Supersingular reduction and $\Pic(\Xbar)$} For $X$ as in \eqref{k3eq}, let $\Xbar$ be the base change of $X$ to a fixed algebraic closure $\kbar$ of $k$. There is a filtration $$\Br_0(X)\subseteq\Br_1(X)\subseteq\Br(X),$$ where $\Br_1(X)=\ker(\Br(X)\to\Br(\Xbar))$, and $\Br_0(X)$ is the image of $\Br(k)\to\Br(X)$.  We take advantage of an isomorphism $\Br_1(X)/\Br_0(X)\isom H^1(k,\Pic(\Xbar))$ coming from the Hochschild-Serre spectral sequence, to construct elements of $\Br_1(X)$. Thus our first task is to compute $\Pic(\Xbar)$ for a surface $X$ of the form \eqref{k3eq}. We begin by giving an explicit list of 20 divisors obtained pulling back lines and conics on $\PP^2$ that are tangent to the ramification sextic, which form a sublattice for $\Pic(\Xbar)$. We then show that these divisors generate the entire Picard group using supersingular reduction and elliptic fibrations. Recall that when $\text{char}\ k=0$, the rank of $\Pic(\Xbar)$ is at most 20; we say $X$ has supersingular reduction at a prime $\mathfrak{p}$ of $k$ if the reduction $X_\mathfrak{p}$ has (geometric) Picard rank 22. We use the  intersection properties of the extra divisors coming from the supersingular reduction to show the sublattice we constructed is already saturated in $\Pic(\Xbar)$. To the best of our knowledge, this is the first time supersingular reduction was used to compute the Picard group of the original surface.

Having understood $\Pic(\Xbar)$ as a Galois module, we find coefficients $A,B,C$ for which there is a 2 or 3-torsion element in $\Br_1(X)$ giving an obstruction to rational points.

\begin{theorem}[Theorem \ref{quatthm}]\label{1.1} Let $X/\Q$ be the K3 surface given by
\[
w^2 = 4ax^6 + 2by^6 + 2bc^3 z^6
\]
where $a,b,c$ are nonzero integers. Then the quaternion algebra
\[
{\mathcal A} = \left( \Q(\sqrt{-ac})/\Q, \frac{y^2 + c z^2}{x^2} \right)
\]
represents a class in $\Br(X)$, and if $a,b,c$ satisfies eight conditions (given in the full statement of the theorem below) then there is an algebraic Brauer--Manin obstruction to the Hasse principle on $X$ given by $\calA$.
\end{theorem}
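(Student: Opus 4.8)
The plan is to treat the two assertions separately: first that $\calA$ defines a class in $\Br(X)$, and then that under the eight conditions this class obstructs rational points.

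\textbf{That $\calA\in\Br(X)$.} A priori $\calA$ is only a class in $\Br(\Q(X))$. Since $X$ is a smooth surface, Grothendieck's purity theorem identifies $\Br(X)$ with the classes in $\Br(\Q(X))$ whose residue $\partial_D(\calA)$ vanishes along every prime divisor $D$, so it suffices to check these residues. Because $\calA$ is the quaternion algebra attached to $\Q(\sqrt{-ac})/\Q$, the residue $\partial_D(\calA)$ is controlled by the parity of $\val_D(f)$ together with the splitting of $\sqrt{-ac}$ in the residue field $\Q(D)$: it is trivial whenever $\val_D(f)$ is even or whenever $\sqrt{-ac}\in\Q(D)$. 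The divisor of $f=(y^2+cz^2)/x^2$ is supported on $\{x=0\}$ and on $\{y^2+cz^2=0\}$. Along $\{x=0\}$ the factor $x^2$ forces $\val_D(f)$ to be even, so the residue vanishes there. Along a component $D$ of $\{y^2+cz^2=0\}$ I would exploit the factorization $y^6+c^3z^6=(y^2+cz^2)(y^4-cy^2z^2+c^2z^4)$: substituting $y^2+cz^2=0$ into the equation of $X$ gives $w^2=4ax^6$ on $D$, so $\sqrt a\in\Q(D)$, while $y^2=-cz^2$ gives $\sqrt{-c}\in\Q(D)$; hence $\sqrt{-ac}=\sqrt a\,\sqrt{-c}\in\Q(D)$ and $\Q(\sqrt{-ac})$ splits in $\Q(D)$, killing the residue. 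Thus every residue vanishes and $\calA\in\Br(X)$.

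\textbf{The local invariants.} Write $g=y^4-cy^2z^2+c^2z^4$. The heart of the invariant computation is the identity $w^2-4ax^6=2b(y^2+cz^2)g$ on $X$, which lets me rewrite $f=(w^2-4ax^6)/(2bgx^2)$. Expanding the Hilbert symbol $\inv_v\calA(P_v)=(-ac,f)_v$ by bilinearity, and using that $w^2-4ax^6=w^2-a(2x^3)^2$ is a norm from $\Q_v(\sqrt a)$ (so $(a,w^2-4ax^6)_v=0$) and that $y^2+cz^2$ is a norm from $\Q_v(\sqrt{-c})$ (so $(-c,y^2+cz^2)_v=0$), everything collapses to
\[
\inv_v\calA(P_v)=\bigl(a,\,2b(y^4-cy^2z^2+c^2z^4)\bigr)_v
\]
for every local point $P_v=(x:y:z:w)$. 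I would then show this symbol vanishes for all $v$ outside a finite set $S$ consisting of the archimedean place, $2$, and the primes dividing $abc$ (using that outside $S$ the surface has good reduction and both $a$ and $2bg$ are units of even valuation), and compute it at the places of $S$.

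\textbf{Conclusion and main difficulty.} Granting local solvability $X(\Q_v)\neq\emptyset$ for all $v$ — exactly what the local hypotheses among the eight conditions secure — the Brauer--Manin obstruction reduces to showing $\sum_{v}\inv_v\calA(P_v)=\tfrac12$ for every adelic point $(P_v)$. The main obstacle is the local analysis at the places of $S$: I expect the eight conditions are calibrated precisely so that at each $v\in S$ the value $(a,2bg)_v$ is constant on $X(\Q_v)$ — that is, $2bg$ lies in a single square class modulo norms from $\Q_v(\sqrt a)$ as $P_v$ ranges over local points — and so that these constant values sum to $\tfrac12$. Pinning down the square class of $g$ at $p=2$ and at the primes dividing $abc$, together with the everywhere-local solvability check, is the technical core; the reciprocity law $\sum_v\inv_v=0$ for the global class of $\calA$ serves as a consistency check on these bad-place computations.
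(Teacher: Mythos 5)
Your two opening steps are sound, and the first genuinely differs from the paper: you put $\calA$ in $\Br(X)$ via Grothendieck purity, checking residues along $\{x=0\}$ (even valuation) and along the components of $\{y^2+cz^2=0\}$, where the relations $w^2=4ax^6$ and $y^2=-cz^2$ put both $\sqrt{a}$ and $\sqrt{-c}$, hence $\sqrt{-ac}$, in the residue field. The paper instead uses its Lemma \ref{cyclem}: it exhibits $\divv(f)$ as the norm from $\Q(\sqrt{-Ac})$ of a divisor in the class $d_1-d_4$, a route that leans on the Picard group computation but has the advantage of also detecting when the algebra is constant. Your reduction of the local invariant to $[a,\,2b(y^4-cy^2z^2+c^2z^4)]_v$ is exactly the paper's third working formula, derived from the same factorization $w^2-4ax^6=2b(y^2+cz^2)(y^4-cy^2z^2+c^2z^4)$.

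The genuine gap is everything after that. The theorem is precisely the assertion that the eight explicit conditions imply (a) $X(\Adeles_\Q)\neq\emptyset$, (b) $\inv_v\calA(P_v)=0$ for all $v\neq2$, and (c) $\inv_2\calA(P_2)=1/2$ identically; you defer all three with ``I expect the eight conditions are calibrated precisely so that\dots'', which assumes the conclusion rather than proving it. That case analysis is the entire content of the paper's proof: explicit points and Hasse--Weil bounds for local solvability (conditions (iii), (v)--(viii), with separate arguments at $p=2,3,5,7,11,13$); conditions (i), (ii), (iv), (v), (vi) to kill the invariant at odd places; and a $2$-adic parity argument (condition (viii) forces $a\not\equiv1\bmod 8$, whence $y_2,z_2$ are both odd and $g\equiv 2-c\bmod 8$) giving $\inv_2=[a,2b]_2+[a,2-c]_2=1/2$. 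Two specific claims in your sketch also fail as stated. First, your bad set $S$ omits $p=3$, but the surface never has good reduction at $3$ (in characteristic $3$ the branch sextic is the cube of a conic, since $u^6=(u^2)^3$ and cubing is Frobenius), and condition (iv) exists precisely to make $a$ or $-ac$ a square in $\Q_3$; without it the invariant at $3$ need not vanish. Second, your formula $\inv_v\calA(P_v)=(a,2bg)_v$ does not hold ``for every local point'': it is undefined when $g(P_v)=0$ (possible whenever $3c$ is a square in $\Q_v$), and at good primes $2bg(P_v)$ need not be a unit, so your stated justification there is incomplete --- one must switch among the formulas $[-ac,y^2+cz^2]_v$, $[a,y^2+cz^2]_v$, and $[a,2bg]_v$ according to which quantities vanish, which is exactly the bookkeeping the paper carries out.
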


We remark that there are infinitely many triples of integers satisfying the eight conditions--indeed, infinitely many of the form $(-1,1,c)$.

So far all known examples of Brauer--Manin obstruction to the Hasse principle for K3 surfaces have been given by a 2-torsion element in the Brauer group. In \cite{is2015}, Ieronymou and Skorobogatov ask whether it was possible for the odd torsion part of the Brauer group to obstruct the Hasse principle for a K3 surface over a number field. We show that this is possible:

\begin{theorem}[Theorem \ref{oddtor}] Let $X/\Q$ be the K3 surface given by
$$w^2=-3x^6+97y^6+97\cdot 28\cdot 8z^6.$$
Then there is an 3-torsion class $\calA\in \Br(X)$ such that over $k=\Q(\sqrt{-3})$, one can choose a generator $\sigma\in\Gal(k(\sqrt[3]{28})/k)$ such that $\calA_{k}$ can be written as the cyclic algebra
$$\calA_{k} = \left(k(\sqrt[3]{28})/k, \frac{w-\sqrt{-3}x^3}{w+\sqrt{-3}x^3}\right).$$
Then there is an algebraic Brauer--Manin obstruction to the Hasse principle on $X$ given by $\calA$.
\end{theorem}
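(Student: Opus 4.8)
The plan is to verify three things: that $\calA$ defines an algebraic $3$-torsion class in $\Br(X)$, that $X$ has points everywhere locally, and that $\sum_v \inv_v \calA(P_v)$ is nonzero for every adelic point $(P_v) \in X(\Adeles)$. Throughout, write $k = \Q(\sqrt{-3}) = \Q(\zeta_3)$ and $L = k(\sqrt[3]{28})$, a cyclic cubic extension of $k$ with a chosen generator $\tau$ of $\Gal(L/k)$. The engine of every step is the factorization identity on $X$,
\[
(w - \sqrt{-3}\,x^3)(w + \sqrt{-3}\,x^3) = w^2 + 3x^6 = 97\,N_{L/k}\!\left(y^2 + 2z^2\sqrt[3]{28}\right),
\]
which uses $y^6 + 224 z^6 = (y^2)^3 + 28\,(2z^2)^3$ and the norm form $N_{L/k}(a + b\sqrt[3]{28}) = a^3 + 28 b^3$.

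First I would check that $\calA_k = (L/k, f)$, with $f = (w-\sqrt{-3}x^3)/(w+\sqrt{-3}x^3)$, is unramified on $X_k$, hence lies in $\Br(X_k)$. The only divisors on which $f$ has a zero or pole are the components of $\{w = \sqrt{-3}x^3\}$ and $\{w = -\sqrt{-3}x^3\}$; on each, the factorization forces $N_{L/k}(y^2 + 2z^2\sqrt[3]{28}) = 0$, so the residue field contains $\sqrt[3]{28}$ and the character cutting out $L/k$ restricts trivially. Thus all residues vanish and $\calA_k$ is unramified; it is algebraic because $L \otimes_k \kbar$ splits, killing the class over $\Xbar$, and it is $3$-torsion because $[L:k] = 3$. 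To descend to $\Q$ I would use that $\gcd([k:\Q],3) = 1$: the restriction $\Br(X)[3] \to \Br(X_k)[3]$ is injective with image the $\Gal(k/\Q)$-invariant part. Writing $\sigma$ for complex conjugation, $\sigma(f) = f^{-1}$ and $\sigma \tau \sigma^{-1} = \tau^{-1}$, so $\sigma^* \calA_k = (L/k, \tau^{-1}, f^{-1}) = \calA_k$; hence $\calA_k$ descends to the desired $\calA \in \Br(X)$, which is algebraic and $3$-torsion.

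Next I would establish $X(\Adeles) \neq \emptyset$: $X(\R) \neq \emptyset$ (e.g.\ $(x,y,z) = (0,1,0)$), at primes of good reduction smooth $\F_p$-points lift by Hensel's lemma and exist by the Weil bounds, and at the finitely many bad primes $\{2,3,7,97\}$ one exhibits explicit local points. For the obstruction I would compute the local invariant functions $P_v \mapsto \inv_v \calA(P_v) \in \tfrac13\Z/\Z$. Translating from $k$ via $\inv_v \calA(P_v) = [k_w:\Q_v]^{-1}\,\inv_w \calA_k(P_v)$, I expect the invariant to vanish identically at every place except $97$: at good places an unramified class evaluates to $0$, and at the remaining bad primes $2,3,7$ one checks directly that the local symbol vanishes for all local points. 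The prime $97$ is the one place where the single factor of $97$ in the factorization identity matters: since $97 \equiv 1 \pmod 3$ splits in $k$ and $28$ is a non-cube modulo $97$, the completion $\Q_{97}(\sqrt[3]{28})$ is the unramified cubic extension, so $\inv_{97}\calA(P) = v_{97}(f)/3 \bmod 1$, and $v_{97}\big(97\,N_{L/k}(\cdots)\big) \equiv 1 \pmod 3$ forces $v_{97}(f) \not\equiv 0 \pmod 3$ for every $P \in X(\Q_{97})$. Hence $\inv_{97}\calA(P) \in \{\tfrac13, \tfrac23\}$ is never zero, so $\sum_v \inv_v \calA(P_v) = \inv_{97}\calA(P_{97}) \neq 0$ for all adelic points, and $X(\Q) = \emptyset$.

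The main obstacle is the local invariant analysis of the previous paragraph. Showing that the invariant is identically zero at the wildly ramified primes $2$ and $3$ and at $7$ requires honest local symbol computations in the ramified Kummer cubic $L/k$, rather than the clean tame formula available at $97$. The argument at $97$ itself has a genuine gap to fill: points reducing into the locus $\{x = w = 0\}$ are precisely those for which both factors $w \mp \sqrt{-3}x^3$ acquire positive valuation, where the parity argument for $v_{97}(f)$ breaks down, so I would need to show such points do not lift to $X(\Q_{97})$ (equivalently that $y^6 + 224 z^6$ has no nontrivial zero modulo $97$) or else compute their invariant separately. Finally, the explicit verification of everywhere-local solubility at the bad primes, while routine, must be carried out to license the conclusion.
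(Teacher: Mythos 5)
Your framework (constructing the cyclic algebra over $k=\Q(\sqrt{-3})$, descending to $\Q$ by Galois invariance, and comparing local invariants via restriction) matches the paper's in spirit, but the heart of your argument---the identification of the place where the invariant is nonzero---is wrong, and in a way that kills the proof. You claim the obstruction lives at $p=97$ because ``$28$ is a non-cube modulo $97$.'' This is false: $28^2 = 784 \equiv 8 = 2^3 \pmod{97}$, so $28^2$ is a cubic residue, hence so is $28$ (indeed $28^{32}\equiv 1 \pmod{97}$). By Hensel's lemma $\sqrt[3]{28}\in\Q_{97}$, so $L\otimes_k K_v$ is the split algebra at the place above $97$ (note $97$ splits in $k$), and $\inv_{97}\calA$ vanishes identically. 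This is by design in the paper: the coefficient $97$ is chosen precisely so that the place $97$ contributes nothing.

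The actual obstruction comes from $p=7$, exactly the place where you assert ``one checks directly that the local symbol vanishes.'' At $7$ the extension $\Q_7(\sqrt[3]{28})/\Q_7$ is ramified, and the paper shows that every point of $X(\Q_7)$ reduces so that exactly one of the factors $w\mp\sqrt{-3}x^3$ is divisible by $7$ (working mod $7$, sixth powers are $0$ or $1$, and one checks $w^2+3x^6\equiv 6y^6$ forces $7\mid y$ and then $w\equiv\pm 2x^3$); a valuation computation then gives $\inv_7\calA(P)=[3,28]_7=[3,7]_7\in\{1/3,2/3\}$, constant and never zero, while all other invariants vanish. So the roles of $7$ and $97$ in your proposal are exactly interchanged: carried through as written, your computations would show $\sum_v \inv_v\calA(P_v)=0$ for suitable adelic points, i.e., no obstruction at all. (Secondary issues---the parity argument at your ``key'' place is incomplete when both factors have positive valuation, as you note, and the wild places $2,3$ need honest work---are moot next to this.) The fix is not cosmetic: you must redo the local analysis at $7$, where the tame unramified formula $\inv = v(f)/3$ you rely on is unavailable because the cubic extension is ramified there.
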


As with Theorem \ref{1.1}, there are infinitely many such examples given by coefficients satisfying certain congruence and primality conditions (Remark \ref{infrem}). For these surfaces, we show that $\Br_1(X)/\Br_0(X)\isom\Z/3\Z,$ so there are no obstructions coming from the 2-primary part of the algebraic Brauer group.

\begin{remark} The algebra $\calA$ appearing in Theorem \ref{oddtor} can be seen as coming from a double cover of a rational surface. Let $Y$ be the del Pezzo surface of degree 1 defined by the equation
$$u^2=-3r^6+97s^6+97\cdot28\cdot8t^3.$$
in the weighted projective space $\PP[1,1,2,3]=\Proj k[r,s,t,u]$. Consider the double cover $X\to Y$ defined by sending $[w:x:y:z]\mapsto[w:x:y:z^2]$. Then the class $\calA$ comes from the pullback of the class
$$\left(\Q(\sqrt[3]{28})/\Q, \frac{u-\sqrt{-3}r^3}{u+\sqrt{-3}r^3}\right).$$
An easy way to see this is to use \cite[Theorem 1.4]{isz11}, which says that if $\calA$ is odd torsion and fixed by the covering automorphism $z\mapsto -z$, then it comes from a class in $\Br(Y)$.
\end{remark}

\section*{Acknowledgements}

We thank Anthony Varilly-Alvarado for his continuous support and conversations. We thank Jennifer Berg for valuable discussions. We thank Jean-Louis Colliot-Th\'el\`ene, Brendan Hassett, and Olivier Wittenberg for their feedback on preliminary versions of the paper. Much of the computations were done using the computer algebra system MAGMA \cite{magma}

\section{Geometry}

We calculate the geometric Picard group of the surface $X$ over $\Q$ given by the equation 
\begin{equation}\label{k3kbar}w^2 = x^6 + y^6 + z^6.\end{equation}
Let $V$ Let $\Xbar$ be the base change to $\Qbar$ and $\Pbar = \Pic \Xbar$.

\subsection{Generators} Let $s$ be a cube root of $2$ and $\zeta$ be a  primitive 12th root of unity. Set
\begin{align*}
a_5 &= \frac{2\zeta(-\zeta^2+2)(\zeta^4)}{3}&c_5 &= \frac{\zeta(\zeta^2-2)}{3}\\
r_5 &= -\sqrt{3}\zeta(\zeta^2-2)&v_5 &= -2\sqrt{3}\zeta^2
\end{align*}
Consider the following twenty divisors on $\Xbar$:

\begin{align*}
&D_i: w=x^3,\  y=\zeta^{2i+1} z &1\leq i\leq5\\
&D_j: w=y^3,\ z=\zeta^{2j+3}x &6\leq j\leq9\\
&D_{k}: w=z^3,\ x=\zeta^{2k+7}y &10\leq k\leq 12\\
&D_{13}: w=-x^3,\ y=\zeta^3z &\\ 
&D_{14}: w=s(1-2\zeta^2)x^2y+(1-2\zeta^2)y^3,\ x^2+s^2y^2+z^2=0\\
&D_{15}: w=s(1-2\zeta^2)x^2y+(1-2\zeta^2)y^3,\ \zeta^4x^2+\zeta^4s^2y^2+z^2=0\\
&D_{16}: w=-3x^3+s(2-4\zeta^2)x^2y+3s^2xy^2+(2\zeta^2-1)y^3,\\
&\ \ \ \ \ \ -2x^2+s(1-2\zeta^2)xy+s^2y^2+z^2=0\\
&D_{17}: w=(1-2\zeta^2)x^3-3s^2x^2y+s(4\zeta^2-2)xy^2+3y^3,\\
&\ \ \ \ \ \ s^2\zeta^4x^2+s(\zeta^2+1)xy-2\zeta^4y^2+z^2=0\\
&D_{18}: w=-3y^3+s(2-4\zeta^2)y^2z+3s^2yz^2+(2\zeta^2-1)z^3,\\
&\ \ \ \ \ \ x^2-2y^2+s(1-2\zeta^2)yz+s^2z^2=0\\
&D_{19}: w=-3y^3+s(2-4\zeta^2)y^2z+3s^2yz^2+(2\zeta^2-1)z^3,\\
&\ \ \ \ \ \ x^2+2\zeta^2y^2+s(\zeta^2-2)yz-s^2\zeta^2z^2=0\\
&D_{20}: w=r_5x^3+v_5xyz,\ a_5x^2+c_5(y^2+z^2)+yz=0 \\
\end{align*}

The first thirteen divisors $D_1,\ldots,D_{13}$ each arise as one component of a pullback from a line tangent to the ramification sextic in $\PP^2$ while the last seven are pullbacks of conics tangent to the ramification sextic. For example, the line $y=\zeta^3 z$ on $\PP^2_{[x:y:z]}$ is tritangent to $x^6+y^6+z^6=0$, and the pullback is giving by the equation
$$w^2=x^6,\ y=\zeta^3z$$
which splits into two components corresponding to $w=x^3$ or $w=-x^3$. The first component is $D_1$ while the latter is $D_{13}$.

The rest of the divisors were found with the help of Festi's thesis \cite{Festi}, which includes studies the Picard group of the family of K3 surfaces given by $w^2=x^6+y^6+z^6+tx^2y^2z^2$. It is shown in particular, that when $t=0$, the Picard rank is 20 \cite[Remark 3.3.4]{Festi}, but an explicit list of generators is not given. The divisors $D_{14},\ldots, D_{19}$ were obtained by considering $X$ as a double cover of a del Pezzo surface, as explained in \cite[\S3.3.3]{Festi} . The last divisor comes from \cite[p.96]{Festi}. 

Let $d_i$ be the image of $D_i$ in $\Pbar$ and $M\subset \Pbar$ be the subgroup generated all the $d_i$. Computing the intersection pairing on $M$ shows that the discriminant of this lattice is $-432=-2^4\cdot 3^3$. Since the rank of $\Pbar$ is at most $20$ \cite[VIII.3]{bhpv}, $M$ must generate a finite-index subgroup $\Pbar$.  An easy lattice argument shows that the index $[\Pbar \colon M]$ must divide $36$. In fact, this index is $1$:

\begin{theorem}\label{saturated} The geometric Picard group of the surface $X/\Q$ given by
$$w^2=x^6+y^6+z^6$$
is freely generated by the divisor classes $d_1,\ldots,d_{20}$. In particular $\Pic(\Xbar)\isom \Z^{20}$.
\end{theorem}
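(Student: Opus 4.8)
The plan is to show that the index $n := [\Pbar : M]$, which we already know divides $36$, is coprime to $6$; since the only divisor of $36$ prime to $6$ is $1$, this forces $n = 1$. Equivalently, I must show that $M$ is both $2$- and $3$-saturated in $\Pbar$. Recall also that $\Pbar$ is an even lattice, since $D^2 = 2p_a(D) - 2$ for every divisor class $D$ on a K3 surface, so any overlattice of $M$ corresponds to an isotropic subgroup of the discriminant form on $M^*/M$, a group of order $432$; but I will obtain saturation directly from geometry rather than by classifying these subgroups.

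The geometric input is reduction at a supersingular prime. For a prime $q$ of good reduction, the specialization map $r_q \colon \Pbar = \Pic(\Xbar) \to \Pic(\overline{X_q})$ is injective and respects the intersection pairing. I would choose a prime $q$ coprime to $6$ at which $X$ has supersingular reduction, so that $\Lambda_q := \Pic(\overline{X_q})$ has rank $22$ and discriminant $\det \Lambda_q = -q^{2\sigma_0}$, a power of $q$. Since $r_q(M) \subseteq r_q(\Pbar)$ are both of rank $20$ and $r_q(\Pbar) \subseteq \Lambda_q$, the image $r_q(\Pbar)$ is contained in the saturation $G_q$ of $r_q(M)$ inside $\Lambda_q$; hence $n = [r_q(\Pbar) : r_q(M)]$ divides $[G_q : r_q(M)]$. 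It therefore suffices to produce a supersingular prime $q$ for which this latter index is coprime to $6$.

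To make $[G_q : r_q(M)]$ computable I would first pin down $\Lambda_q$ completely: exhibit two further divisor classes $E_1, E_2$ on $\overline{X_q}$ that appear only in characteristic $q$ (for instance as extra tangent conics, or as components of reducible fibers of an elliptic fibration on $X$) and check that $r_q(d_1), \dots, r_q(d_{20}), E_1, E_2$ span a rank-$22$ lattice whose Gram determinant is exactly $-q^{2\sigma_0}$. This simultaneously certifies that the reduction is supersingular and that these classes generate all of $\Lambda_q$. With the full $22 \times 22$ intersection matrix in hand, computing the saturation $G_q$ of the rank-$20$ sublattice $\langle r_q(d_i)\rangle$ and its index over $r_q(M)$ is a finite computation via Smith normal form. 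Because $\det \Lambda_q$ is a power of $q$ and hence prime to $6$, one expects the $2$- and $3$-parts of this index to vanish: concretely, for $p \in \{2,3\}$ one has $v_p([G_q:r_q(M)]^2) = v_p(432) - v_p(\det T_q)$, where $T_q = G_q^\perp$ is the rank-$2$ orthogonal complement, so verifying $v_2(\det T_q) = 4$ and $v_3(\det T_q) = 3$ (possibly using two different supersingular primes, one for each of $2$ and $3$) yields $[G_q:r_q(M)]$ coprime to $6$ and therefore $n = 1$.

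The main obstacle is the middle step: locating explicit supersingular primes $q$ and, above all, writing down the extra divisors $E_1, E_2$ on the reduction together with all of their intersection numbers against the $r_q(d_i)$. Verifying that these classes complete the reductions of $d_1, \dots, d_{20}$ to a lattice realizing the supersingular discriminant $-q^{2\sigma_0}$ is the crux of the argument; once the correct Gram matrix is assembled, the remaining saturation and index computations are routine linear algebra over $\Z$.
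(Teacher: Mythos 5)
Your logical skeleton is sound: since $[\Pbar:M]$ divides $36$, proving that the $2$- and $3$-parts of the saturation index vanish does establish the theorem; specialization at a prime of good reduction is an isometric embedding; and your discriminant bookkeeping (the $p$-parts of the discriminant groups of a saturated sublattice and of its orthogonal complement agree when the ambient discriminant is prime to $p$) is correct lattice theory. But there is a genuine gap, which you yourself flag as ``the crux'': the entire proof rests on producing a supersingular prime $q$ together with two explicit divisors $E_1,E_2$ on $X_{\Fbar_q}$ whose Gram matrix with the $r_q(d_i)$ has determinant $\pm$ a power of $q$. No prime, no divisors, and no intersection numbers are exhibited, so the verification on which everything depends is never carried out -- and this step is not routine: pinning down the full rank-$22$ supersingular lattice up to $q$-power index is far more than one needs, and far more than one can typically write down by hand. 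Moreover, the strategy as stated has no a priori guarantee of success: even granting $\Pbar=M$, the saturation $G_q$ of $r_q(M)$ inside $\Pic X_{\Fbar_q}$ could be strictly larger than $r_q(\Pbar)$ (specialization need not preserve primitivity), in which case $[G_q:r_q(M)]$ would acquire factors of $2$ or $3$ and your check would fail at that prime even though the theorem is true. Closing that hole requires an extra input you never invoke, such as the Elsenhans--Jahnel theorem that the cokernel of the specialization map for K3 surfaces is torsion-free at odd primes of good reduction.

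It is worth contrasting this with the paper's proof, which is engineered precisely to avoid needing the full supersingular lattice. The paper first runs a Sylow/Galois-cohomology argument (a $p$-group acting on a nonzero $\F_p$-vector space has nonzero fixed vectors) to reduce $2$- and $3$-saturation to two single explicit candidate classes, $d_6+d_9+d_{15}$ and $d_1+d_3+d_5$. The $2$-part is then handled by exhibiting just \emph{one} extra divisor $D'$ on $X_{\Fbar_5}$ and checking the single intersection number $(d_6+d_9+d_{15})\cdot d'=1$, which is odd, so the class is not twice an element of $\Pbar$; the $3$-part needs no supersingular input at all, since $d_1+d_3+d_5$ is identified as the class of a fiber of an elliptic fibration and fiber classes are non-divisible. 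So even if your computation were completed, you would be trading two short verifications for a much heavier one; if you want to salvage your route, note that the paper's alternate-approach section (the sets $S_{r,p}$) shows how \emph{partial} knowledge of $\Pic X_{\Fbar_5}$ -- intersections against whatever divisors you can find, rather than a full Gram matrix of discriminant $-q^{2\sigma_0}$ -- already suffices to bound the candidates for divisibility.
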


\begin{proof}
We must show that the sublattice $M\subset \Pbar$ is saturated. Recall that $M$ has discriminant $-2^4\cdot3^3$, so it suffices to show that the maps
\begin{align*}
\phi_2\colon M/2M\to \Pbar/2\Pbar\\
\phi_3\colon M/3M\to \Pbar/3\Pbar
\end{align*}
are injective. Let $G_2$ and $G_3$ be 2-Sylow and 3-Sylow subgroups respectively of $G:=\Gal(K/\Q)$ where $K$ is the smallest Galois extension where all the divisors $d_i$ are defined. Now $G_2$ acts on both $M/2M$ and $\Pbar/2\Pbar$ so we have an induced map $\phi_2^{G_2}\colon (M/2M)^{G_2}\to(\Pbar/2\Pbar)^{G_2}$. Then $(\ker\phi_2)^{G_2}=\ker\phi_2^{G_2}$ will be nonzero whenever $\ker \phi_2$ is nonzero, by \cite[p.64, Proposition 26]{ser77}. Let $d\in M$ whose image $\overline{d}\in M/2M$ is fixed by $G_2$. For $\overline{d}$ to lie in $\ker \phi_2^{G_2}$, we must have $d\cdot d \equiv 0\pmod 4$ and $d\cdot d'\equiv 0\pmod 2$ for all $d'\in M$. A MAGMA computation shows that the only possibility is $d\equiv d_6+d_9+d_{15}\mod 2M$. It suffices to show that $d_6+d_9+d_{15}\notin 2\Pbar$.

We use the method of supersingular reduction. For a prime $p$ such that the reduction $X_{\Fbar_p}$ is smooth, there is an embedding $\Pic\Xbar\to\Pic X_{\Fbar_p}$ (which naturally extends the intersection form on $\Pic \Xbar$, see proof of \cite[Proposition 3.6]{vak3} for example). If $X_{\Fbar_p}$ is supersingular, i.e., when $\Pic X_{\Fbar_p}\isom \Z^{22}$ if $p\neq2$, then intersecting the image of $d_6+d_9+d_{15}$ with the extra divisors in $\Pic X_{\Fbar_p}$ could yield more information. MAGMA computations show that
$$D':= w+2x^3-(2\alpha+1)x^2y+2xy^2+(2\alpha+1)y^3,\ (\alpha+2)x-z+\alpha y,$$
where $\alpha$ is a root of $t^2+t+1$, is a divisor on $X_{\Fbar_5}$ whose class is independent from the $d_i$'s, so 5 is a supersingular prime. Denoting $d'$ the class of $D'$ in $\Pic X_{\Fbar_5}$, we have $(d_6+d_9+d_{15})\cdot d'=1$ which shows that $d_6+d_9+d_{15}\notin 2\Pbar$.

Similarly for $\phi_3$, we are reduced to showing $d_1+d_3+d_5\notin 3\Pbar$. Rewrite \eqref{k3kbar} as
$$(w+x^3)(w-x^3)=(y^3+iz^3)(y^3-iz^3).$$
There is a morphism $X\to\PP^3_{[r:s:u:v]}$ given by $[w:x:y:z]\mapsto [w+x^3:w-x^3:y^3+iz^3:y^3-iz^3]$, whose image is contained in the quadric $Q$ defined by $rs=uv$. The projection to one of the rulings on $Q$ will define an elliptic fibration on $X$. Indeed, up to a choice of ruling, the fiber above $[\alpha,\beta]\in\PP^1$ will be
\begin{align*}
\alpha(w+x^3)=\beta(y^3+iz^3)\\
\beta(w-x^3)=\alpha(y^3-iz^3)
\end{align*}
When $\alpha\beta\neq0$ and $\alpha\neq\pm \beta$, this is isomorphic to
$$2\alpha\beta x^3=(\beta^2-\alpha^2)y^3+(\beta^2+\alpha^2)iz^3,$$
which is a smooth integral curve of genus 1. When $\alpha=0$, the fiber is precisely $d_1+d_3+d_5$. Hence $d_1+d_3+d_5$ is a fiber of an elliptic fibration, so it cannot be divisible (see \cite[Proposition 11.1.5(iii)]{huyk3}).
\end{proof}

\subsection{An alternate approach}
We show how one can also compute $\Pic \Xbar$ without the use of the divisor $D_{20}$ from Festi's thesis. This section is unnecessary for the main results of the paper, but we include it in case it is of use to the reader.

Consider the lattice $M'$ generated by the divisors $d_1,\ldots,d_{19}$ along with the more obvious divisor
$$d'_{20}:w=z^3,\ x=\zeta^9y.$$
These divisors still generate a rank $20$ sublattice in $M'\subset\Pbar=\Pic \Xbar$. In fact, one can show that $M' \ne \Pbar$, and also abstractly compute the full Picard group without using the divisor $d_{20}$.

By \cite[Prop. 1.6.1]{nikulin}, $\Pbar^\vee/\Pbar\isom T^\vee/T$ where $T:= \NS(X)^\perp\subset H^2(X(\C),\Z)$ is the transcendental lattice. Note in this case the bilinear form on $T$ is given by a positive definite $2\times2$ matrix. Hence we can write $\Pbar^\vee/\Pbar\isom \Z/n\Z\times\Z/m\Z$ for some $m,n\in\Z$. However the discriminant of $M'$ is $-2^4\cdot 3^5$ and the discriminant group $M'^\vee/M'$ is isomorphic to
$$(\Z/4\Z)^2\times (\Z/3\Z)^3\times \Z/9\Z.$$
This implies that $M'\neq\Pbar$. The method of supersingular reduction used in the proof of Theorem \ref{saturated} can determine the ``missing" divisor. For $r$ and $p$ primes, define
\[
S_{r,p} = \{ d \in M' \colon d \cdot x \equiv 0 \ \text{mod $r$, for all $x \in \Pic X_{\overline {\mathbb 
F}_p}$} \}.
\]
Then for all $p$, any $d \in M'$ which is divisible by $r$ in $\Pbar$ will lie in $S_{r,p}$.

For our surface $X$, recall that $5$ is a supersingular prime. Writing down obvious divisors on $X_{\overline {\mathbb F}_5}$ and computing intersections, we get
\begin{align*}
S_{2,5} &= 2M' \\
S_{3,5} &\subseteq 3M' + \Z (d_1 + d_4 + d_6 + d_9 + d_{10} + d_{20}' - (d_2 + d_3 + d_7 + d_8 + d_{11} + d_{12}))
\end{align*}

Let $d = d_1 + d_4 + d_6 + d_9 + d_{10} + d_{20}' - (d_2 + d_3 + d_7 + d_8 + d_{11} + d_{12})$. Since $M'\neq\Pbar$, the above computations show that $M'+\Z[d/3]\subseteq \Pbar$. One can then show saturation as in proof of Theorem \ref{saturated}. This abstract computation of the Picard group is also sufficient for the purpose of doing arithmetic on our surface $X$, and in particular for computing the group $H^1(k,\Pbar)$.

\section{Arithmetic}

\subsection{Fields of definition} We are interested in degree 2 K3 surfaces $X$ over $\Q$ of the form
\begin{equation}\label{abc}
w^2 = Ax^6 + By^6 + Cz^6 \, \, (A,B,C \in \Q)
\end{equation}
Let $\alpha, \beta, \gamma$ be sixth roots of $A,B,C$ respectively. The results of the previous section show that replacing $x$ by $\alpha x$, $y$ by $\beta y$, $z$ by 
$\gamma z$ in the divisors $d_1, \ldots, d_{20}$ gives a set of divisor classes generating $\Pbar = \Pic \Xbar$. The minimal field of definition of these twenty divisors is 
the extension $\Q(s,\zeta,\alpha/\beta, \beta/\gamma, \alpha^3)$. Generically this has degree $3 \cdot 4 \cdot 6 \cdot 6 \cdot 2 = 864$ over $\Q$. Let $G$ be the generic 
Galois group of this extension. 

\subsection{Algebraic Brauer--Manin obstructions} For the definition of the Brauer--Manin obstruction, see \cite{manin1971} and also \cite{skorobogatovtors}. Our goal is to give some examples of the algebraic Brauer--Manin obstruction on the surfaces (\ref{abc}). More ambitiously, one might hope to eventually carry out a full study of the algebraic Brauer--Manin obstruction on the family (\ref{abc}), as in \cite{ctksdiag} and \cite{corndp2}.

Suppose that $X$ has points everywhere locally; then $\Br(k)$ embeds naturally in $\Br_1(X)$ (elements of the image are called {\em constant algebras}) and $\Br_0(X)$ denotes its image. The idea is to make the key isomorphism $\Br_1(X)/\Br_0(X) \to H^1(k,\Pbar)$ explicit. As in \cite{cornK3}, our program proceeds by first analyzing $H^1(H,\Pbar)$ for all the subgroups $H$ of $G$, then identifying those $H$ which will give rise to nonconstant cyclic algebras in $\Br_1(X)$. Since $|G|=864=2^5\cdot3^3$, only 6-primary elements appear in $\Br_1(X)/\Br_0(X)$. We first consider quaternion algebras, and then 3-torsion elements in section \ref{odd}.

A MAGMA computation shows that $H^1(G,\Pbar) = 0$, so there is no algebraic Brauer--Manin obstruction on the generic surface (\ref{abc}). This is in marked contrast to the del Pezzo surfaces in \cite{ctksdiag} \cite{corndp2}, where $\Br_1(X)/\Br_0(X)$ is generically trivial.

In order to construct explicit cyclic algebras, we use the following standard lemma.

\begin{lemma}\label{cyclem} Let $k$ be a number field, let $X/k$ be a smooth projective variety, and let $(L/k,f)$ be a cyclic algebra in $\Br k(X)$, where $L/k$ is cyclic and $f \in 
k(X)$. Then $(L/k,f)$ is in $\Br(X)$ if and only if the divisor of $f$ equals the norm from $L/k$ of a divisor $D$ on $X_L$; if $X$ is everywhere locally soluble, it is a constant algebra if and only if we 
can take $D$ to be principal. \end{lemma}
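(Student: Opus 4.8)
The plan is to characterize membership in $\Br(X)$ through residues and then evaluate them explicitly for the cyclic algebra $(L/k,f)=\chi\cup f$, where $\chi\in\HH^1(k,\Q/\Z)$ is the character cutting out $L$. For a smooth projective (geometrically integral) variety $X$ over $k$, purity for the Brauer group gives an exact sequence
\[
0 \to \Br(X) \to \Br(k(X)) \xrightarrow{(\partial_C)} \bigoplus_{C} \HH^1(\kappa(C), \Q/\Z),
\]
the sum running over the prime divisors $C\subset X$ and $\partial_C$ being the residue at the valuation $\ord_C$ (I would cite the standard purity statement; for $\dim X=2$ it is classical). Thus $(L/k,f)\in\Br(X)$ iff every residue vanishes. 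Because $L/k$ is a constant extension, $\chi$ is pulled back from $k$ and hence unramified along each $C$, so the residue formula for a cup product with one unramified factor reads $\partial_C(L/k,f)=\ord_C(f)\,\chi_C$, where $\chi_C\in\HH^1(\kappa(C),\Q/\Z)$ is the restriction of $\chi$ to $\kappa(C)\supseteq k$. Writing $e_C$ for the order of $\chi_C$, the residue at $C$ vanishes precisely when $e_C\mid\ord_C(f)$.

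Next I would match this local divisibility condition with the image of the norm map on divisors. Let $\pi\colon X_L\to X$ be the projection, so $N_{L/k}=\pi_*$. Over a prime divisor $C$ the fiber consists of prime divisors all of the same residue degree $e_C$ over $\kappa(C)$ (they are permuted transitively since $L/k$ is Galois, and this common degree is exactly the order of $\chi_C$), so each pushes forward to $e_C\cdot C$. Hence the image of $N_{L/k}\colon\Div(X_L)\to\Div(X)$ is exactly $\bigoplus_C e_C\Z\cdot C$, and $\divv(f)$ lies in this image iff $e_C\mid\ord_C(f)$ for every $C$. Comparing with the previous paragraph yields the first assertion: $(L/k,f)\in\Br(X)$ iff $\divv(f)=N_{L/k}(D)$ for some divisor $D$ on $X_L$.

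For the constant-algebra criterion I would argue in both directions. If $D=\divv(h)$ is principal then $\divv(f)=\divv(N_{L/k}h)$, so $f=c\,N_{L/k}(h)$ with $c\in k^*$ (global units are $k^*$ as $X$ is projective and geometrically integral); the projection formula $\chi\cup\Cor(h)=\Cor(\Res(\chi)\cup h)$ together with $\Res_L\chi=0$ shows $(L/k,N_{L/k}h)=0$, whence $(L/k,f)=(L/k,c)$ is constant. Conversely, suppose $(L/k,f)=\alpha_X$ for some $\alpha\in\Br(k)$. Here everywhere local solubility enters: for each place $v$ choose $P_v\in X(k_v)$ avoiding the support of $\divv(f)$ (possible by smoothness and $X(k_v)\neq\emptyset$), and evaluate to get $\alpha_{k_v}=(L/k,f(P_v))$, which is split by $L\otimes_k k_v$. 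By Albert--Brauer--Hasse--Noether, $\alpha$ is split by $L$, so $\alpha=(L/k,c)$ for some $c\in k^*$ since $L/k$ is cyclic. Then $(L/k,f/c)=0$ in $\Br(k(X))$, and the triviality criterion for cyclic algebras (valid since $L\otimes_k k(X)$ is a field, $k$ being algebraically closed in $k(X)$) gives $f/c=N_{L(X)/k(X)}(h)$ for some $h\in L(X)^*$; thus $\divv(f)=N_{L/k}(\divv h)$ and $D$ may be taken principal.

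The routine part is the residue computation and the norm-image bookkeeping. The step I expect to require the most care is the converse of the constant criterion, where local solubility is used to evaluate the class at local points and Albert--Brauer--Hasse--Noether is then invoked to descend $\alpha$ to a global cyclic algebra $(L/k,c)$: one must check both that suitable local points avoiding $\divv(f)$ exist and that $L\otimes_k k(X)$ is a field so that the cyclic-algebra triviality criterion applies.
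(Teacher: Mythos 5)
Your proof is correct, and it is essentially the argument behind the result the paper itself does not reprove but only cites (\cite[Proposition 2.2.3]{thesis}): membership in $\Br(X)$ via purity and the residue formula $\partial_C(L/k,f)=\ord_C(f)\,\chi_C$, matched against the image $\bigoplus_C e_C\Z\cdot C$ of the norm map on divisors, and the constancy criterion via evaluation at local points, Albert--Brauer--Hasse--Noether, and the norm criterion for triviality of cyclic algebras. The only caveat, which you yourself flag, is that one must assume $X$ geometrically integral (so that $H^0(X,\calO_X^*)=k^*$ and $L\otimes_k k(X)$ is a field), a hypothesis implicit in the paper's applications.
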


{\em Proof:} This is a standard result; see \cite[Proposition 2.2.3]{thesis} for a proof.

The lemma can be stated more precisely using \cite[Theorem 3.3]{va08}. Let $N_{L/k}:\Div X_L\to\Div X_k$ and $\overline{N}_{L/k}:\Pic X_L \to \Pic X_k$ be the usual norm maps. Let $\Delta:\Pic X_L\to\Pic X_L$ be given as acting by $1-\sigma$ where $\sigma$ is a generator for $\Gal(L/k)$. Then we have an isomorphism
$$\ker \overline{N}_{L/k}/\im\Delta\isom \Br_{\text{cyc}}(X,L).$$
where $\Br_{\text{cyc}}(X,L)$ denotes cyclic algebras in $\Br_1(X)$ which are split by $L$. The map is defined by sending $[D]$ to $[(L/k,f)]$ where $f\in k(X)$ is any function such that $\text{div}(f)=N_{L/k}(D)$.

\subsection{The explicit quaternion algebra}\label{quatalg}
If we take the short exact sequence of Galois modules
$$0\to  \Pbar\xrightarrow{\times 2}\Pbar\to \Pbar/2\Pbar\to 0$$
and consider its long exact sequence in Galois cohomology, we get 
\[
H^1(H,\Pbar)[2] \cong \frac{(\Pbar/2\Pbar)^H}{\Pbar^H/2\Pbar^H}.
\]
Hence we search for subgroups $H$ which fix certain elements in $\Pbar/2 \Pbar$ without fixing any of their representatives in $\Pbar$. In practice, we search first for 
groups $H$ with a nonzero element in $H^1(H,\Pbar)[2]$ that becomes zero upon restriction to an index-$2$ subgroup of $H$. We find four subgroups $H$ of order $288$ 
with this property.

Let $H_1$ be the mod-2 stabilizer in $G$ of the divisor class 
\begin{equation}\label{divisor}
d_1 - d_4 = \begin{pmatrix} w = \alpha^3 x^3 \\ \beta y = \gamma \zeta^3 z \end{pmatrix} - \begin{pmatrix} w = \alpha^3 x^3 \\ \beta y = 
\gamma \zeta^9 z \end{pmatrix}.
\end{equation}
Then the orbit of $d_1 - d_4$ under $H_1$ is $\{ \pm (d_1 - d_4) \}$. 

Let $H_2$ be the stabilizer of the class $d_1-d_4$ inside $H_1$.

\begin{lemma} Let $c=C/B$. Then the Galois group of the field of definition of $d_1, \ldots, d_{20}$ is a subgroup of 
$H_1$ if and only if $c$ is a cube in $\Q$. In this case,
$$\left( \Q(\sqrt{-Ac})/\Q,\ \dfrac{y^2 + c z^2}{x^2} \right)$$
represents a quaternion algebra in $\Br(X)$.
\end{lemma}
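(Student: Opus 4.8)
The plan is to prove the two assertions separately: the group‑theoretic equivalence, and then membership of $\calA$ in $\Br(X)$ via Lemma \ref{cyclem}. The conceptual heart of the first part is to identify $H_1$ intrinsically as the stabilizer of a cube root of $C/B$; once that is done, the equivalence with ``$C/B$ is a cube'' is formal, and the Brauer‑group statement reduces to a divisor computation.

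For the equivalence, I would first read off that the two components making up $d_1-d_4$ lie on the lines $\beta y=\gamma\zeta^3 z$ and $\beta y=\gamma\zeta^9 z$ (both with $w=\alpha^3x^3$). Since $\zeta^3+\zeta^9=0$, this pair of lines is cut out by $\beta^2y^2+\gamma^2z^2=0$, i.e. by the single quantity $\gamma^2/\beta^2$, which is a cube root of $C/B$. I claim $H_1=\operatorname{Stab}_G(\gamma^2/\beta^2)$. For the inclusion $\operatorname{Stab}_G(\gamma^2/\beta^2)\subseteq H_1$: any $g$ fixing $\gamma^2/\beta^2$ permutes the two lines, and on $X$ it either fixes the two components $w=\alpha^3x^3$ or sends each to its partner $w=-\alpha^3x^3$, according to whether $g$ fixes $\alpha^3$. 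Writing $h=\pi^*\OO_{\PP^2}(1)$ for the pullback of a line under $\pi\colon X\to\PP^2$, the tritangent relation gives $[\,w=\alpha^3x^3\,]+[\,w=-\alpha^3x^3\,]=h$ for each of these lines, so in every case $g(d_1-d_4)\equiv \pm(d_1-d_4)\equiv d_1-d_4\pmod{2\Pbar}$, whence $g\in H_1$. As $\gamma^2/\beta^2$ is one of the three cube roots of $C/B$, which the generic group $G$ permutes transitively, $\operatorname{Stab}_G(\gamma^2/\beta^2)$ has index $3$ and order $288$; since $|H_1|=288$ as well, the two groups coincide.

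With $H_1=\operatorname{Stab}_G(\gamma^2/\beta^2)$ in hand, the equivalence is immediate: the Galois group $G_{A,B,C}$ of the field of definition lies in $H_1$ exactly when every element fixes $\gamma^2/\beta^2$, i.e. exactly when $\gamma^2/\beta^2\in\Q$. Choosing the sixth roots $\beta,\gamma$ so that $\gamma^2/\beta^2$ is the real cube root of $C/B$, this says $(C/B)^{1/3}\in\Q$, that is, $C/B$ is a cube. (Either interpretation of ``$c$'' yields the same field $\Q(\sqrt{-Ac})$, since $\sqrt{-A(C/B)}$ and $\sqrt{-A(C/B)^{1/3}}$ differ by the rational factor $(C/B)^{1/3}$.) So for the second assertion I assume $C/B$ is a cube, write $C=c^3B$ with $c\in\Q$, and set $L=\Q(\sqrt{-Ac})$ and $f=(y^2+cz^2)/x^2\in\Q(X)$. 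By Lemma \ref{cyclem} it suffices to produce a divisor $D$ on $X_L$ with $\divv(f)=N_{L/\Q}(D)$. The key observation is that on $\{y^2+cz^2=0\}$ one has $y^6=-c^3z^6$, hence $By^6+Cz^6=(-Bc^3+C)z^6=0$; thus this locus meets $X$ only where $w^2=Ax^6$, and it splits into the four lines $\ell^{\pm}_{\pm}\colon\ y=\pm\sqrt{-c}\,z,\ w=\pm\sqrt{A}\,x^3$. Therefore $\divv(f)=\ell^+_++\ell^-_++\ell^+_-+\ell^-_- -2C_x$, where $C_x=X\cap\{x=0\}$. I would then take $D=\ell^+_++\ell^-_--C_x$: the combination $\ell^+_++\ell^-_-$ is stable under the involution $\sqrt{-c}\mapsto-\sqrt{-c},\ \sqrt{A}\mapsto-\sqrt{A}$, which fixes $\sqrt{-Ac}$, so $D$ descends to $X_L$; applying the nontrivial element of $\Gal(L/\Q)$ (realized by $\sqrt{-c}\mapsto-\sqrt{-c}$ with $\sqrt A$ fixed) sends $D$ to $\ell^+_-+\ell^-_+-C_x$, and summing gives $N_{L/\Q}(D)=\divv(f)$. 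Hence $\calA=(L/\Q,f)\in\Br(X)$, consistent with Theorem \ref{1.1}.

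The hard part is the first assertion, namely the clean identification of $H_1$ as the stabilizer of a cube root of $C/B$: the inclusion $\operatorname{Stab}_G(\gamma^2/\beta^2)\subseteq H_1$ rests on the Picard relation $\ell^++\ell^-=h$ together with the behavior of the components under $\alpha^3\mapsto\pm\alpha^3$, and closing the argument requires the computed order $|H_1|=288$. The second assertion is then a direct divisor computation whose only real content is noticing that $y^2+cz^2$ divides $By^6+Cz^6$ precisely when $C=c^3B$, so that the locus splits and a manifestly $L$‑rational half of it has the right norm.
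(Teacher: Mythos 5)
Your proof is correct and follows essentially the same route as the paper: the same identification of $H_1$ as the stabilizer of the cube root $(\gamma/\beta)^2$ of $C/B$, the same appeal to Lemma \ref{cyclem} with $L=\Q(\sqrt{-Ac})$ and $f=(y^2+cz^2)/x^2$, and in fact the very same $L$-rational divisor, since your $\ell^+_+ + \ell^-_- - C_x$ is exactly the paper's $D_1 + D_4' - C_x$, which is what its cocycle computation with $g$ produces. The only differences are presentational: you verify the norm condition $N_{L/\Q}(D)=\divv(f)$ directly instead of deriving $f$ by descending the class $d_1-d_4$, and you supply an argument (the relation $d_i+d_i'=h$ from the split tritangent pullbacks, plus the order count $|H_1|=288$) for the stabilizer identification that the paper dismisses as ``not hard to see.''
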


\begin{proof} We construct a quaternion algebra in $\Br(X)$ using Lemma \ref{cyclem} as follows.

The first step in applying the lemma is to find a divisor class defined over a suitable quadratic extension whose norm is zero in $\Pbar$; in this case, we can take 
$L/k$ to be the field of definition of $d_1 - d_4$, the fixed field of $H_2$, and the class $d_1-d_4$ has norm zero. The next step (usually the most computationally 
difficult) is to find a divisor defined over $L$ whose divisor class is $d_1 - d_4$. Note that the divisor $D_1 - D_4$, the difference between the two lines on the right 
side of (\ref{divisor}), is not defined over $L$. In fact, its orbit has four elements, namely 
\[
D_1 - D_4, D_4 - D_1, D_1' - D_4', D_4' - D_1',
\]
where $D_i'$ is $D_i$ with $x^3$ replaced by $-x^3$. So $D_1 - D_4$ is defined over a quartic extension of the ground field. 

It is not hard to see that $H_1$ is the stabilizer inside $G$ of the element $(\beta/\gamma)^2$, a cube root of $B/C$. Hence the Galois group of the field of definition of the divisor classes is a subgroup of $H_1$ if and only if $C/B$ is a cube in $\Q$. So let $C = Bc^3$. The quartic extension over which $D_1-D_4$ is defined is $\Q(\alpha^3,\zeta^3 \gamma/\beta) = \Q(\sqrt{A}, \sqrt{-c})$. Call this extension $M$.

The fixed field of $H_2$ is the extension fixed by the automorphism $\alpha^3 \mapsto -\alpha^3$, $\zeta^3 \gamma/\beta \mapsto \zeta^9 \gamma/\beta$ (which sends $D_1$ to $D_4'$ and $D_4$ to $D_1'$--this is because $D_1 - D_4 = D_4' - D_1'$). So $L$, the fixed field of $H_2$, is the quadratic extension $\Q(\sqrt{-Ac})$.

In order to find a divisor in the class of $d_1 - d_4$ defined over $L$, we start with $D_1-D_4$, defined over $M$, and search for a function $g \in k(X_M)$ such that
\begin{equation}\label{cocycle}
D_1 - D_4 + (g) = D_4' - D_1' + (\sigma g),
\end{equation}
where $\sigma$ is the nontrivial element of $\Gal(M/L)$.

Simplifying equation (\ref{cocycle}) gives
\[
\left( \frac{\sigma g}{g} \right) = D_4' + D_4 - (D_1' + D_1) = \left( \frac{y - z \zeta^9 \gamma/\beta}{y - z \zeta^3 \gamma/\beta} \right),
\]
which suggests letting $g = \dfrac{y-z\zeta^3 \gamma/\beta}{x} = \dfrac{y-z\sqrt{-c}}{x}$.

Now $f$ should be a rational function whose divisor is 
\[
D_1 - D_4 + (g) + \tau(D_1 - D_4 + (g)),
\]
where $\tau$ is the nontrivial element of $\Gal(L/\Q)$. Extend $\tau$ to the automorphism fixing $\sqrt{A}$ and sending $\sqrt{-c}$ to $-\sqrt{-c}$; then 
$\tau(D_1) = D_4$, $\tau(D_4) = D_1$, and $\tau(g) = \dfrac{y+z\sqrt{-c}}{x}$. Putting this all together gives the natural choice
\[
f = \frac{y^2 + c z^2}{x^2}.
\]
So our quaternion algebra in $\Br(X)$ is $\left( \Q(\sqrt{-Ac})/\Q,\ \dfrac{y^2 + c z^2}{x^2} \right)$.\end{proof}

\section{Explicit counterexamples to the Hasse principle}

By the results of the previous section, the surface $X$ given by the equation $w^2 = Ax^6 + By^6 + Bc^3 z^6$ has a quaternion algebra in $\Br(X)$ given by 
\[
{\mathcal A} = \left( \Q(\sqrt{-Ac})/\Q,\ \frac{y^2 + c z^2}{x^2} \right).
\]
To find examples where $\mathcal A$ exhibits a Brauer--Manin obstruction to rational points, we search for integers $A,B,c$ such that
\[
\sum_v {\rm inv}_v \calA(P_v) = 1/2
\]
for all $(P_v) \in X({\mathbb A}_{\Q})$. Note that ${\rm inv}_v {\mathcal A}(P_v) = [-Ac, f(P_v)]_v$, where $f = (y^2 + c z^2)/x^2$ and $[,]_v$ is the Hilbert 
symbol, written additively: $[a,b]_v = 0$ if the quadratic form $x^2 -ay^2 -bz^2$ represents $0$ over $\Q_v$, and $1/2$ otherwise. (This formula holds if $P_v$ is not in the zero locus of the 
numerator or denominator of $f$; otherwise we must use another rational function $f'$ obtained from $f$ by multiplying it by the norm of some rational function in $k(X_L)$.)

\begin{theorem}\label{quatthm} Let $a$, $b$, $c$ be odd integers satisfying the following conditions:
\begin{enumerate}[(i)]
\item For every prime $p>3$ dividing $a$ or $b$, $\nu_p(a)$ and $\nu_p(b)$ lie in $\{ 1,2,4,5 \}$
\item $c$ is squarefree
\item $a > 0$ or $b > 0$
\item $a$ or $-ac \equiv 1$ mod $3$
\item If $p$ is a prime divisor of $a$, then $p|c$ and $\left( \dfrac{2b}{p} \right) = 1$
\item If $p$ is a prime divisor of $b$, then $\left( \dfrac{a}{p} \right) = 1$ and $\left( \dfrac{-c}{p} \right) = 1$
\item If $7 | c$, then we do not have $4a \equiv 2b \equiv 3,5$, or $6$ mod $7$
\item The triple ($a$ mod 8, $b$ mod 8, $c$ mod 8) equals $(3,1,1), (3,3,3), (3,3,7), (3,5,1), (3,7,3)$, $(3,7,7), (5,1,1), 
(5,1,5), (5,1,7), (5,3,1), (5,3,5), (5,3,7), (5,5,1), (5,5,5), (5,5,7), \ \ \ \ $ $(5,7,1), (5,7,5), (5,7,7), (7,1,3), (7,1,7), (7,3,5)$, $(7,5,3), (7,5,7)$, or $(7,7,5)$ 
\end{enumerate}
Then the surface $w^2 = 4ax^6 + 2by^6 + 2bc^3 z^6$ over $\Q$ is a counterexample to the Hasse principle explained by the algebraic Brauer--Manin obstruction.
\end{theorem}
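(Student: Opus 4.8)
The plan is to verify the two ingredients of a Brauer--Manin obstruction: that $X$ is everywhere locally soluble, and that the class $\calA$ has a constant, nonzero sum of local invariants over the entire adelic space. Concretely, I would show $X(\Q_v)\neq\emptyset$ for every place $v$ of $\Q$, and then prove that $\sum_v \inv_v\calA(P_v)=1/2$ for all $(P_v)\in X(\Adeles_\Q)$. Granting both, $X(\Adeles_\Q)\neq\emptyset$ while no adelic point has vanishing invariant sum, so $X(\Q)=\emptyset$ and the failure of the Hasse principle is accounted for by the single algebraic class $\calA\in\Br_1(X)$ produced in the previous section. Throughout I use $\inv_v\calA(P_v)=[-ac,f(P_v)]_v$ with $f=(y^2+cz^2)/x^2$, having absorbed the square factor $4$ so that $-Ac=-4ac$ becomes $-ac$ in the Hilbert symbol.

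First I would pin down the finite set $S=\{\infty,2,3,7\}\cup\{p:p\mid abc\}$ outside of which nothing happens. For $v\notin S$ the surface has good reduction and $\calA$ is unramified (here $-ac$ is a $v$-adic unit and $f$ extends, up to norms, to a well-defined function on a smooth $\Z_v$-model), so the evaluation map $X(\Q_v)\to\frac12\Z/\Z$ is identically zero, and such $v$ supply local points by Lang--Weil together with Hensel's lemma. It remains to treat $v\in S$. Real solubility is immediate from (iii): if $a>0$ one makes $4ax^6$ dominate and if $b>0$ one makes $2by^6$ dominate, so $w^2=4ax^6+2by^6+2bc^3z^6$ is solvable over $\R$. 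Local solubility at the primes of $S$ is where the congruence conditions enter: (viii) at $2$, (iv) at $3$, (vii) at $7$, and (i), (v), (vi) at the odd primes dividing $abc$, each chosen so that a suitable coordinate specialization lifts by Hensel's lemma.

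The heart of the argument is the place-by-place evaluation of $[-ac,f(P_v)]_v$ for $v\in S$ and the proof that it is constant on $X(\Q_v)$. The main algebraic tool is the factorization coming from the surface equation,
\[
w^2-4ax^6 = 2b\,(y^2+cz^2)\,(y^4-cy^2z^2+c^2z^4),
\]
which lets me rewrite $f$ modulo squares and modulo norms from $\Q_v(\sqrt{-ac})$, and in particular lets me evaluate $\calA$ at points in the zero or polar locus of $f$ by switching to the equivalent representative $f'$ flagged in the discussion preceding the theorem. For each odd prime $p\in S$ I would scale $P_v$ to be primitive and integral, read off $[-ac,f(P_v)]_p$ from the valuations and residues of the coordinates, and use the relevant condition to force a single value: (v) (that $p\mid c$ and $(2b/p)=1$) handles $p\mid a$, (vi) (that $(a/p)=(-c/p)=1$) handles $p\mid b$, (ii) together with the split behavior handles $p\mid c$, and (iv), (vii) handle $p=3$ and $p=7$. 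The place $p=2$ is treated by the same method, but with the full $2$-adic case analysis governed by the list (viii). Finally I would assemble the constants: the eight conditions are engineered precisely so that the nonzero local invariants --- contributed by $\infty$, $2$, $3$, $7$, and the primes dividing $abc$ --- sum to $1/2$.

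The main obstacle is the computation at $v=2$. The factors of $2$ and $4$ in the defining equation make the $2$-adic point set intricate, the extension $\Q_2(\sqrt{-ac})$ depends delicately on $ac$ modulo $8$, and showing that $[-ac,f(P_2)]_2$ is genuinely constant across all of $X(\Q_2)$ requires a careful enumeration --- this is exactly what the twenty-four admissible triples in (viii) encode. A secondary, recurring difficulty is verifying constancy of the local invariant at each bad prime and correctly handling the points where $f$ vanishes or blows up, where one must pass to $f'$ via the displayed factorization; and, at the end, confirming that the finitely many nonzero contributions really total $1/2$ rather than $0$, which is the step that actually certifies the obstruction is nontrivial.
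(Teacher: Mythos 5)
Your framework coincides with the paper's: the same algebra $\calA$, the same invariant formula $\inv_v\calA(P_v)=[-ac,f(P_v)]_v$ with $f=(y^2+cz^2)/x^2$, the same identity $w^2-4ax^6=2b(y^2+cz^2)(y^4-cy^2z^2+c^2z^4)$ for handling points where $f$ vanishes or blows up, and the correct identification of the $2$-adic analysis governed by (viii) as the crux. However, two of your steps do not go through as written. First, local solubility: your claim that every place outside $S=\{\infty,2,3,7\}\cup\{p:p\mid abc\}$ is handled by ``Lang--Weil together with Hensel's lemma'' fails at $p=5,11,13$, which generically lie outside $S$. Lang--Weil is asymptotic, and the effective Hasse--Weil bound for the genus-$2$ curve $w^2=4ax^6+2by^6$ guarantees only $p+1-4\sqrt{p}$ points, which is negative for all $p\le 13$. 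The paper treats exactly these primes by hand: mod $5$ and $11$, sixth powers coincide with squares, so one counts points on the conic $w^2=4ax^2+2by^2$; mod $13$ one checks that one of $4a$, $2b$, $4a\pm 2b$ is a square; and $p=7$ uses condition (vii). You must either enlarge $S$ to include $5,11,13$ and give such arguments, or replace Lang--Weil by an effective bound plus case analysis.

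Second, your final assembly step is underdetermined in a way that matters. You propose to show each local invariant is constant and that the constants contributed by $\infty,2,3,7$ and the primes dividing $abc$ ``sum to $1/2$,'' but you never identify those constants, and this is precisely where the content lies: conditions (iii)--(vii) are engineered to force $\inv_v\calA(P_v)=0$ \emph{identically for every} $v\ne 2$ (at each bad odd place they make $a$ or $-ac$ a local square, or make $f(P_v)$ a unit square), while condition (viii) forces $\inv_2\calA(P_2)=1/2$ identically, via the reduction $[a,2b]_2+[a,y_2^4-cy_2^2z_2^2+c^2z_2^4]_2=[a,2b]_2+[a,2-c]_2$ once one shows that $y_2,z_2$ must both be odd at any $\Q_2$-point. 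Until these place-by-place evaluations are carried out, nothing in your plan distinguishes a total of $1/2$ from a total of $0$, so the concluding step of the proposal is exactly the part still missing. (Your suggestion to prove vanishing at the good primes via good reduction and unramifiedness of $\calA$, rather than the paper's direct computation with Hilbert symbols, is a legitimate alternative for that portion.)
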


\smallskip

{\em Proof:} Let $X$ be the surface given in the theorem. We will show
\begin{enumerate}[(a)]
\item $X$ has points everywhere locally
\item $\inv_v {\mathcal A}(x_v) = 0$ for all $v \ne 2$ and $x_v \in X(\Q_v)$
\item $\inv_2 {\mathcal A}(x_2) = 1/2$ for all $x_2 \in X(\Q_2)$
\end{enumerate}

\subsection{Proof of (a)} At $v=\infty$, condition (iii) guarantees a real point. Next suppose $v$ corresponds to a prime $p \nmid 6ab$. If $p > 13$ there is always a smooth point on the genus-2 curve 
$w^2 = 4ax^6 + 2by^6$ mod $p$ by the Hasse-Weil bound, which we can lift to a $\Q_p$-point by Hensel's lemma. For $p=13$ there is a point on this curve if and only if at least one of $4a$, 
$2b$, $4a+2b$, and $4a-2b$ is a square mod $13$, which always happens. For $p=7$ there is a point on this curve if and only if at least one of $4a$, $2b$, or $4a+2b$ is a square mod $7$, which 
happens as long as $4a$ and $2b$ are not congruent to each other and to a nonsquare mod $7$, as in condition (vii). If $7 \nmid c$, then it is not hard to check that the surface will always 
have a point mod $7$. For $p=5$ and $p=11$, being a sixth power is equivalent to being a square, and the curve $w^2 = 4ax^2 + 2by^2$ has $p+1$ points mod $p$, so $w^2 = 4ax^6 + 2by^6$ has 
$\Q_p$-points by Hensel's lemma. 

We finish the proof of (a) by checking primes dividing $6ab$. If $p|a$, then condition (v) guarantees a nontrivial point $(\sqrt{2b} \colon 0 \colon 1 \colon 0)$ on $X(\Q_p)$. If $p|b$, then 
the first part of condition (vi) guarantees a nontrivial point $(2\sqrt{a} \colon 1 \colon 0 \colon 0)$ on $X(\Q_p)$. For $p=3, p \nmid ab$ we either have the point $(2\sqrt{a} \colon 
1 \colon 0 \colon 0) \in X(\Q_3)$ if $a \equiv 1$ mod $3$, the point $(\sqrt{2b} \colon 0 \colon 1 \colon 0)$ if $2b \equiv 1$ mod $3$, or the point $(\sqrt{4a+2b} \colon 1 \colon 1 \colon 0)$ 
otherwise.

For $p=2$, all the triples in condition (viii) satisfy one of the following three conditions:
\begin{itemize}
\item $-c^3 \equiv 1$ mod $8$
\item $\dfrac2{b} - c^3 \equiv 1$ mod $8$
\item $\dfrac{2-2a}{b} - c^3 \equiv 1$ mod $8$
\end{itemize}

Note that odd rational integers in $\Q_2$ are sixth powers if and only if they are $1$ mod $8$. So: if the first condition is satisfied, then there is a point $(0 \colon 0 \colon 
\sqrt[6]{-c^3} \colon 1) \in X(\Q_2)$. If the second condition is satisfied, then there is a point $\left( 2 \colon 0 \colon \sqrt[6]{\frac2{b} - c^3} \colon 1 \right) \in X(\Q_2)$. 
If the third condition is satisfied, then there is a point $\left( 2 \colon 1 \colon \sqrt[6]{\frac{2-2a}{b} - c^3} \colon 1 \right) \in X(\Q_2)$.

\subsection{Invariant computations and shortcuts} Next we prove (b). When we compute invariants, we will avail ourselves of several shortcuts. First, we can certainly substitute any rational 
function of the form $\dfrac{y^2 + c z^2}{\ell(x,y,z)^2}$ for $f$, since this function differs from $f$ by a square in $k(X)$. So as long as $y(P_v)^2 + c z(P_v)^2$ is nonzero, we 
have 
\begin{equation}\label{form1}
\inv_v {\mathcal A}(P_v) = [-ac, y(P_v)^2 + c z(P_v)^2]_v.
\end{equation}
Since $y^2 + c z^2$ is a norm from $\Q_v(\sqrt{-c})$ to $\Q_v$, we can further simplify:
\begin{equation}\label{form2}
\inv_v {\mathcal A}(P_v) = [a, y(P_v)^2 + c z(P_v)^2]_v.
\end{equation}
There is one more formula we will use in our computations. Note that the equation of the surface can be rewritten as $2b(y^2 + c z^2)(y^4 - c y^2 z^2 + c^2 z^4) = w^2 - 4ax^6$, 
so 
\begin{align*}
[a,2b]_v + [a,y(P_v)^2 + c z(P_v)^2]_v + [a,y(P_v)^4 - c y(P_v)^2 z(P_v)^2 + c^2 z(P_v)^4]_v & \\
= [a,w(P_v)^2 - 4a x(P_v)^6]_v &= 0
\end{align*}
because $w(P_v)^2 - 4ax(P_v)^6$ is clearly a norm from $\Q_v(\sqrt{a})$ to $\Q_v$. Since these symbols take values in $\frac12 \Z/\Z$, we get
\begin{equation}\label{form3}
\inv_v {\mathcal A}(P_v) = [a,2b]_v + [a,y(P_v)^4 - c y(P_v)^2 z(P_v)^2 + c^2 z(P_v)^4]_v.
\end{equation}
We will use this when $y(P_v)^2 + c z(P_v)^2 = 0$. Note that if $y(P_v)^2 + c z(P_v)^2$ and $y(P_v)^4 - c y(P_v)^2 z(P_v)^2 + c^2 z(P_v)^4$ are both $0$, then substituting 
in $z(P_v)^2 = -c y(P_v)^2$ gives $3y(P_v)^4 = 0$, so $y(P_v) = 0$, and similarly $z(P_v) = 0$. (Note that the same holds mod $p$ if $p \nmid 3 c$.) This implies that $w(P_v)^2 = 4a 
x(P_v)^6$, so $a$ would be a square in $\Q_v$. Hence either $-ac$ would be a square in $\Q_v$ or $y^2 + c z^2$ would be the norm of $y + \sqrt{-ac}(z/\sqrt{a})$ from 
$\Q_v(\sqrt{-ac})$ to $\Q_v$, so we would automatically have that $\inv_v {\mathcal A}(P_v) = 0$ for any $P_v$. So one of the three formulas above will compute the invariant unless it is 
automatically $0$. 

\subsection{Proof of (b)} At $v=\infty$, we need either that $-ac > 0$ or $y(P_v)^2 + c z(P_v)^2 > 0$ for all $P_v \in X(\Q_v)$. If $a>0$, then the first inequality holds if $c < 0$ 
and the second holds if $c > 0$. So (given that condition (iii) holds) we must only check that the second inequality also holds when $a < 0$ and $b > 0$. But in this case, we have that 
$w(P_v)^2 - 4ax(P_v)^6 = 2b(y(P_v)^6 + c^3 z(P_v)^6)$, and the left side is positive, so $y(P_v)^6 + c^3 z(P_v)^6 > 0$; hence $y(P_v)^2 + c z(P_v)^2 > 0$ as well.

Next suppose that $v$ corresponds to a prime $p \nmid 6abc$. Then $-ac$ is a unit in $\Q_p$, and if $y(P_v)^2 + c z(P_v)^2$ is a unit in $\Q_p$, we see that $\inv_v {\mathcal 
A}(P_v) = 0$ automatically. If it is not a unit in $\Q_p$, then $-c$ is a square mod $p$, and we get $w(P_v)^2 \equiv 4a x(P_v)^6$ mod $p$. If $w(P_v)$ or $x(P_v)$ is not 0, then $a$ is a square mod $p$. So then $-ac$ 
is a square mod $p$, which implies $\inv_v {\mathcal A}(P_v) = 0$ as well. If $w(P_v)=x(P_v)=0$, then we can assume $y(P_v)=1,z(P_v)=\pm\sqrt{-1/c}$. Thus $\inv_v {\mathcal A}(P_v)=[a,2b]_v+[a,3]_v=0$ since $a,2b,3$ are all units in $\Q_p$.

Now suppose $p \mid a$, $p > 3$. Then if $w(P_v)$ is divisible by $p$, condition (v) shows that $y(P_v)$ is as well. But then conditions (1), (2), and (5) force $x(P_v)$ and $z(P_v)$ to be divisible by 
$p$. So we may assume $w(P_v)$ (and $y(P_v)$) are invertible mod $p$, so $y(P_v)^2 + c z(P_v)^2$ is a unit and a square in $\Q_p$, so the invariant is automatically zero.

Now suppose $p \mid b$, $p > 3$. Then condition (vi) shows that $-ac$ is a square mod $p$, so that the invariant is automatically zero.

Suppose $p \mid c$, $p > 3$. We may assume $p \nmid ab$ as well. There are two cases: if $y(P_v)$ is divisible by $p$, then so is $w(P_v)^2 - 4a x(P_v)^6$, so that $a$ is a square mod $p$ and the 
invariant is automatically zero (by formula (\ref{form2})); but if $y(P_v)$ is a unit in $\Q_p$, then $y(P_v)^2 + c z(P_v)^2$ is a unit and a square in $\Q_p$, whence the invariant is again 
automatically zero.

Finally, if $p=3$, then $a$ or $-ac$ is a square in $\Q_3$, so we are done by formulas (\ref{form1}) and (\ref{form2}).

\subsection{Proof of (c)} We must analyze $X(\Q_2)$ in order to compute the invariant at any point. So take a point $(2w_2 \colon x_2 \colon y_2 \colon z_2) \in X(\Q_2)$, scaled so that the coordinates are 
elements of $\Z_2$, not all even. (We have written the first coordinate as $2w_2$, $w_2 \in \Z_2$, because clearly the first coordinate must be even.) Since $2b(y_2^6 + c^3 z_2^6) = 4w_2^2 - 4ax_2^6$ is 
divisible by $4$ and $b$ is odd, $y_2$ and $z_2$ must have the same parity. Note that if $y_2$ and $z_2$ are both even, then we get $w_2^2 \equiv ax_2^6$ mod $2^6$, which can only happen if $x_2$ is even 
because $a$ is never congruent to $1$ mod $8$ (see condition (viii)). This is a contradiction, so we may assume that $y_2$ and $z_2$ are both odd.

Now we use formula (\ref{form3}) to compute the invariant. We get
\[
[a,2b]_2 + [a, y_2^4 - c y_2^2 z_2^2 + c^2 z_2^4]_2 = [a,2b]_2 + [a,2-c]_2
\]
because both arguments of the second norm residue symbol are odd, hence we need only know them modulo $8$. It is easy to check that the sum of these two symbols is $1/2$ for every triple in condition (viii). 
So we are done. \hfill $\Box$

\medskip

\begin{remark} There are infinitely many triples $(a,b,c)$ satisfying the conditions of the theorem. For instance, let $\gamma$ be any squarefree integer congruent to $7$ or $19$ mod $24$; 
then the triple $(-1,1,c)$ satisfies the conditions of the theorem, so the surface
\[
w^2 = -4x^6 + 2(y^6 + c^3 z^6)
\]
is a counterexample to the Hasse principle explained by the algebraic Brauer--Manin obstruction.
\end{remark}

\section{Odd torsion obstruction to the Hasse principle}\label{odd}

Now we focus on the case of 3-torsion Brauer elements. There are three index-2 subgroups $H$ of $G$ for which the 3-torsion in $H^1(H,\Pic(\Xbar))$ is nontrivial. In each of these cases, $H^1(H,\Pbar)$ is isomorphic to $\Z/3\Z$. When the surface is defined over $\Q$, each of these three subgroups correspond to one of the following restrictions on the coefficients $A,B,C$:
\begin{enumerate}
\item $-3A$ is a square.
\item $-3B$ is a square.
\item $-3C$ is a square.
\end{enumerate}
Condition (2) is equivalent to saying $X/\Q$ can be written in the form
\begin{equation}
\label{k3}
w^2=-3x^6+B'y^6+C'z^6
\end{equation}
for some $B',C'\in\Q$ such that $(\zeta,\sqrt[3]{2},\sqrt[6]{B'},\sqrt[6]{C'})$ is Galois general. We will study the obstruction on the surfaces of the form \eqref{k3}.

\subsection{The explicit cyclic algebra.} Assume $X/\Q$ is given by \eqref{k3} and let $\calA\in\Br(X)$ such that its class in $\Br_1(X)/\Br_0(X)$ is a generator. Instead of finding a cyclic algebra representative for $\calA$ over $\Q$, we find one over the larger field $K=\Q(\sqrt{-3})$. The larger field will not be a problem in computing invariants, since $[K:\Q]$ is coprime to 3. Let $L=K(\gamma^2/\beta^2)$ which is a cyclic extension, and fix a generator $\sigma\in\Gal(L/K)$. We proceed as before and use Lemma \ref{cyclem}. A simple computation shows that
$$\ker{\overline{N}_{L/K}}/\im\Delta\isom\Z/3\Z$$ with generator $d_1+d_4-d_{13}-d_{13}'\in \Pic X_L$, where $d_{13}'$ is the class of the divisor
\begin{equation}
D_{13}'=\begin{pmatrix} w = -\sqrt{-3} x^3 \\ y = \zeta^9 \gamma/\beta z \end{pmatrix}.
\end{equation}
Then the image of $D_1+D_4$ under $N_{L/K}$ in $\Div X_K$ is
\begin{equation}
\begin{pmatrix} w = \sqrt{-3} x^3 \\ y^6 =  z^6 \end{pmatrix}.
\end{equation}
The image of $D_{13}+D_{13}'$ under $N_{L/K}$ is
\begin{equation}
\begin{pmatrix} w = -\sqrt{-3} x^3 \\ y^6 = z^6  \end{pmatrix}.
\end{equation}
One easily sees that
$$f=\frac{w-\sqrt{-3}x^3}{w+\sqrt{-3}x^3}$$
satisfies $\text{div}(f)=N_{L/K}(D_1+D_4-D_{13}-D_{13}')$ so Lemma \ref{cyclem} gives $\calB:=(L/K,f)\in \Br(X_K)$. Since $[K:\Q]=2$, the natural restriction $H^1(\Q,\Pbar)[3]\to H^1(K,\Pbar)[3]$ is injective. A MAGMA computation shows $H^1(K,\Pbar)\isom\Z/3\Z$ also, so the restriction map is in fact an isomorphism and the image of $\calB$ generates $\Br_1(X_K)/\Br_0(X_K)$. Hence we can assume $\calB=\calA_K$.

\subsection{The counterexample}

\begin{theorem}\label{oddtor} The surface $X/\Q$ given by the equation
$$w^2=-3x^6+97y^6+97\cdot 28\cdot 8z^6$$
is a counterexample to the Hasse principle explained by a 3-torsion Brauer element in the algebraic Brauer group.
\end{theorem}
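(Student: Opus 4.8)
The plan is to mirror the proof of Theorem~\ref{quatthm}. Write $\calA$ for the $3$-torsion class of the theorem, whose restriction to $K=\Q(\sqrt{-3})$ is the cyclic algebra $\calB=\calA_K=(L/K,f)$ constructed above, with $L=K(\sqrt[3]{28})$ and $f=(w-\sqrt{-3}x^3)/(w+\sqrt{-3}x^3)$. I would establish
\begin{enumerate}[(a)]
\item $X$ has points everywhere locally;
\item $\inv_v\calA(P_v)=0$ for every place $v\neq 7$ and every $P_v\in X(\Q_v)$;
\item $\inv_7\calA(P_7)$ is a single nonzero element of $\tfrac13\Z/\Z$ for every $P_7\in X(\Q_7)$.
\end{enumerate}
Granting these, $\sum_v\inv_v\calA(P_v)$ is a nonzero constant on $X(\Adeles_\Q)$, so $\calA$ obstructs the Hasse principle while (a) gives $X(\Adeles_\Q)\neq\emptyset$. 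Two reductions simplify the bookkeeping. First, since $\calA$ is $3$-torsion and $\Br(\R)[3]=0$, the archimedean invariant vanishes automatically. Second, to compute $\inv_v\calA(P_v)$ at a finite place $v$ it suffices to compute $\inv_w\calB(P_v)$ at a place $w\mid v$ of $K$: if $v$ splits in $K$ then $\inv_v\calA=\inv_w\calB$, while if $v$ is inert or ramified then $\inv_w\calB=\inv_v(\cores_{K/\Q}\calB)=2\inv_v\calA$ in $\tfrac13\Z/\Z$, which is recoverable since $2$ is invertible mod $3$.

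The computational engine is the cubic norm-residue symbol: because $K\ni\zeta_3$ and $L=K(\sqrt[3]{28})$, one has $\inv_w\calB(P)=(28,\,f(P))_w$, the local cubic symbol valued in $\tfrac13\Z/\Z$. The key identity comes from rewriting the defining equation as
\[
(w-\sqrt{-3}x^3)(w+\sqrt{-3}x^3)=97\,(y^6+224z^6)=97\,N_{L/K}\!\left(y^2+\sqrt[3]{224}\,z^2\right),
\]
using $a^3+224b^3=N_{L/K}(a+\sqrt[3]{224}\,b)$ with $a=y^2,\ b=z^2$ and $\sqrt[3]{224}=2\sqrt[3]{28}\in L$. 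Since norms from $L$ pair trivially, $(28,\,97\,N_{L/K}(\cdots))_w=(28,97)_w$, so writing $\xi=w-\sqrt{-3}x^3$ and $\eta=w+\sqrt{-3}x^3$ gives $(28,\xi)_w+(28,\eta)_w=(28,97)_w$ and hence $(28,f)_w=2(28,\xi)_w-(28,97)_w$. This reduces every local invariant to tame/explicit symbols and confines the possibly nonzero places to those dividing $2\cdot3\cdot7\cdot97$. On the zero/pole locus of $f$ I would, as in the quaternion case, replace $f$ by a norm-equivalent representative before evaluating.

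For part (a) I would exhibit explicit $\Q_v$-points everywhere: for $p\nmid 2\cdot3\cdot7\cdot97$ a smooth point on the genus-$2$ curve $w^2=-3x^6+97y^6$ lifts by Hensel once $p$ is large (Hasse--Weil), with a finite check for the remaining small primes, and at $2,3,7,97$ the coefficients $-3$, $97$, and the factorization $97\cdot28\cdot8$ are engineered to produce a point directly. For part (b) I would run the symbol computation of the previous paragraph at $v=2,3,97$: the prime $97$ is harmless because $97\equiv1\bmod3$ splits in $K$ and $L/K$ is unramified there, so $(28,f)_w$ vanishes once one checks the cube class of $28$ modulo $97$ (and handles the locus $97\mid y^6+224z^6$ via the norm relation), while at $2$ and $3$ the corestriction from $K$ together with the explicit symbols forces vanishing on all local points. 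The crux is part (c): at $p=7$, which splits in $K$ with $L_w/\Q_7$ a ramified cubic, I would describe $X(\Q_7)$ precisely enough modulo the relevant congruences to evaluate $(28,f(P))_7$ and show it equals a fixed nonzero cube class. This last step is the main obstacle, being the exact analogue of—but more delicate than—the mod-$8$ analysis at $2$ in Theorem~\ref{quatthm}: the cubic symbol tracks cube classes rather than square classes and lives over a ramified cubic extension, so proving that the invariant is genuinely \emph{constant} over the whole of $X(\Q_7)$, rather than merely nonzero at one point, is where the real work lies and where a MAGMA-assisted case analysis would be needed.
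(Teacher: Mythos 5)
Your framework coincides with the paper's own: pass to $K=\Q(\sqrt{-3})$ and the cyclic algebra $\calB=(L/K,f)$, recover $\inv_p\calA$ from $\inv_w\calB$ using the fact that restriction multiplies invariants by $[K_w:\Q_p]$ (the paper's diagram \eqref{diag}; your corestriction phrasing is an equivalent formulation), kill the archimedean place by $3$-torsion, and show all finite invariants vanish except at $7$. Your norm identity $(w-\sqrt{-3}x^3)(w+\sqrt{-3}x^3)=97\,N_{L/K}(y^2+2\sqrt[3]{28}\,z^2)$, giving $(28,f)_w=2(28,\xi)_w-(28,97)_w$ with $\xi=w-\sqrt{-3}x^3$, $\eta=w+\sqrt{-3}x^3$, is correct and is a clean repackaging of the manipulation the paper performs at $p=7$ (where it writes $f=\xi\eta/\eta^2$ and substitutes the defining equation). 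The problem is that the decisive computations are announced rather than executed, and in one place the identity is credited with more than it delivers. At a generic prime $w$ the identity only yields $(28,f)_w=2(28,\xi)_w$, and the tame symbol $(28,\xi)_w$ need not vanish unless $v_w(\xi)\equiv 0\bmod 3$; so the claim that the identity ``confines the possibly nonzero places to those dividing $2\cdot3\cdot7\cdot97$'' still requires the paper's geometric step, namely that $\mathfrak{p}\mid\xi$ forces $\mathfrak{p}\mid y,z$, that $\mathfrak{p}$ then divides exactly one of $\xi,\eta$, and hence $v_\mathfrak{p}(\xi)\equiv 0\bmod 6$. Likewise at $p=2$, ``corestriction together with explicit symbols forces vanishing'' is not an argument: $2$ is inert in $K$, so one must actually compute $\inv_w\calB$, which the paper does by showing one of $w,x$ is a $2$-adic unit and then verifying $f(P)\equiv 1\bmod 8\calO_v$, hence a cube in the unramified quadratic $K_v$.

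The most serious gap is part (c), which is where the theorem lives and which you explicitly defer to ``a MAGMA-assisted case analysis.'' Moreover your stated target there --- that $\inv_7\calA(P_7)$ is \emph{a single} nonzero element, i.e.\ constant on $X(\Q_7)$ --- is both stronger than necessary and not what the paper's analysis produces: it finds $[f(P),28]_7=1/3$ when $7\mid\xi$ and $2/3$ when $7\mid\eta$, so the invariant may vary from point to point, and an attempt to prove constancy would stall. What must be shown (and suffices, since all other invariants vanish) is non-vanishing at \emph{every} $P_7\in X(\Q_7)$. The paper's route is: a finite residue check showing no point of $X_0(\Q_7)$ has both $\xi$ and $\eta$ prime to $7$, so exactly one is divisible; the valuation constraint $v_7(\xi\eta)=v_7(97y^6+97\cdot28\cdot8z^6)\equiv 0,1\bmod 6$; and the congruence that the undivided factor is $\equiv\pm3\bmod 7$, which pins $f(P)$ down to $3\cdot 7^{6n}$ times a cube (or its inverse) and hence the symbol to $1/3$ (or $2/3$), using $[3,7]_7=1/3$. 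Until that analysis --- or an equivalent one --- is supplied, your proposal is a correct plan that mirrors the paper's strategy, but not a proof.
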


\begin{proof}
One easily checks that $X$ has local points everywhere. To compute evaluation maps, we first extend the ground field to $K$ so that we can use the algebra representation $\calA_K=(L/K,f)$. Let $X_0$ be the open subset defined by $w^2+3x^6\neq0$. Then the implicit function theorem gives that $X_0(K_v)$ is dense in $X(K_v)$ for all places $v$. It is well known that the local evaluation maps or Brauer elements are continuous, so it suffices to compute the evaluation maps on $X_0$, where the rational function $f$ is defined everywhere. Let $p\in \Z$ be a prime. Since our cyclic algebra representation is only valid on $K$, we first study the evaluation map $\inv_v \calA_K(-):X_0(K_v)\to\Q/\Z$ restricted to the image of $X_0(\Q_p)\hookrightarrow X_0(K_v)$ for some prime $v$ lying above $p$. In the case $K_v=\Q_p$, we immediately get $\inv_p \calA (-)=\inv_v \calA_K(-)$. In the case $[K_v:\Q_p]=2$, we can still discern information on $\inv_v \calA_K(-)$ by using the commutative diagram \cite[Chapter XIII, \S3, Proposition 7]{serrelocalfields}
\begin{align}\label{diag}
\xymatrixcolsep{4pc}\
\xymatrix{
\Br(\Q_p) \ar[d]^{\inv_p} \ar[r]^{\res_{K_v/\Q_p}} & \Br(K_v) \ar[d]^{\inv_v} \\\
\mathbb{Q}/\mathbb{Z} \ar[r]^{\cdot 2} & \mathbb{Q}/\mathbb{Z}}
\end{align}
We now compute the invariant maps for all finite primes. For the infinite place, the evaluation map is trivial since $\Br(\R)\isom\Z/2\Z$. Upon fixing a third root of unity, for any point $P_v\in X_0(K_v)$ we have $\inv_v \calA_K(P_v)=[f(P),28\cdot 8]_v=[f(P),28]_v$, where $[\ ,\,]_v$ is the norm residue symbol.

\subsubsection{$p\neq 2,7$}

Let $v=v_\mathfrak{p}$ be a place corresponding to a prime $\mathfrak{p}$ lying above $p$. If $28$ is a cube in $K_v$, then $N_{LK_v/K_v}$ is the identity map, so the norm residue symbol is trivial for any point in $X(K_v)$. Suppose $28$ is not a cube in $K_v$; in particular this means $p\neq3,97$. Let $P=[w:x:y:z]\in X(\Q_p)$. We can homogenize so that $w,x,y,z\in\Z_p$ and one of them is in $\Z_p^\times$.

Case 1: $\mathfrak{p}$ divides neither $w-\sqrt{-3}x^3$ nor $w+\sqrt{-3}x^3$. Then $f(P)$ is a unit in $\calO_v$, so it must be a cube in $K_v(\sqrt[3]{28})$. Hence $[f(P),28]_v=0$.

Case 2: $\mathfrak{p}$ divides either $w-\sqrt{-3}x^3$ or $w+\sqrt{-3}x^3$. This means $\mathfrak{p}$ divides $97y^6+97\cdot28\cdot8z^6$ which can only happen if $\mathfrak{p}\mid y,z$. Consequently, $v_\mathfrak{p}(97y^6+97\cdot28\cdot8z^6)\equiv 0 \mod 6$. Moreover, $\mathfrak{p}$ must divide exactly one of $w-\sqrt{-3}x^3$ or $w+\sqrt{-3}x^3$, since otherwise $\mathfrak{p}\mid w,x$. Thus $v_\mathfrak{p}(f(P))\equiv 0 \mod 6$, and $[f(P),28]_v=[u,28]_v$ for some unit $u\in \calO_v$ so must be trivial as in Case 1.

Hence $\inv_p \calA(-)$ is trivial also by \eqref{diag}.

\subsubsection{$p=2$}

Let $v=v_2$ be the unique place lying above 2. Let $P=[w:x:y:z]\in X_0(\Q_2)$. We can homogenize so that $w,x,y,z\in\Z_2$ and one of them is in $\Z_2^\times$. We can assume either $w$ or $x$ is a $2$-adic unit, since if $2\mid w,x$, then $2\mid y$ so the 2-adic valuation of $w^2+3x^6-97y^6$ is either 2 or $\geq6$. It clearly cannot be 2, so that means 2-adic valuation of $97\cdot28\cdot8z^6$ is 6 or greater which means $2|z$, a contradiction. Now a MAGMA computation shows that $f(P)$ is a $v$-adic unit and is always congruent to $1 \mod 8\calO_v$. Then $[f(P),28]_v$ is trivial since $f(P)$ is a cube in $K_v$.

\subsubsection{$p=7$}

Let $v=v_\mathfrak{p}$ be a place corresponding to a prime $\mathfrak{p}$ lying above $7$. Then $K_v\cong\Q_7$. Let $P=[w:x:y:z]\in X_0(\Q_7)$. We can homogenize so that $w,x,y,z\in\Z_7$. It is clear that we can assume either $w$ or $x$ is a 7-adic unit. There are two cases:

Case 1: $\mathfrak{p}$ divides neither $w-\sqrt{-3}x^3$ nor $w+\sqrt{-3}x^3$. A MAGMA computation shows that this is not possible.

Case 2: $\mathfrak{p}$ divides either $w-\sqrt{-3}x^3$. Note that 7 cannot divide $w+\sqrt{-3}x^3$ in this case. Then $v_\mathfrak{p}(w-\sqrt{-3}x^3)=v_\mathfrak{p}(w^2+3x^6)=v_\mathfrak{p}(97y^6+97\cdot28\cdot8z^6)$. Observe that $v_\mathfrak{p}(97y^6+97\cdot28\cdot8z^6)\equiv 0,1\mod 6$.

Subcase 1: Valuation is $0$ mod $6$. This means that $97y^6+97\cdot28\cdot8z^6\equiv97\cdot7^{6n}\mod 7^{6n+1}$. A MAGMA computation also shows that $w+\sqrt{-3}x^3\equiv 3,-3\mod 7$. Thus
\begin{alignat*}{2}
f(P)&=\frac{w-\sqrt{-3}x^3}{w+\sqrt{-3}x^3}\\
&=\frac{(w-\sqrt{-3}x^3)(w+\sqrt{-3}x^3)}{(w+\sqrt{-3}x^3)^2}\\
&=\frac{97y^6+97\cdot28\cdot8z^6}{(w+\sqrt{-3}x^3)^2}\\
&\equiv\frac{97\cdot7^{6n}}{2}&\pmod {7^{6n+1}}\\
&\equiv3\cdot7^{6n}&\pmod {7^{6n+1}}
\end{alignat*}
So we get
$$[f(P),28]_v=[3,28]_v=[3,7]_v+[3,4]_v=[3,7]_v=1/3$$
by (64) of \cite{ctksdiag}. (Note $[3,4]_v=0$ since $3$ becomes a cube in $K_v(\sqrt[3]{4})$)

Subcase 2: Valuation is $1$ mod $6$. This means that $97y^6+97\cdot28\cdot8z^6\equiv97\cdot28\cdot8\cdot7^{6n}\mod 7^{6n+2}$. We can carry out the same computation as above to get
$$[f(P),28]_v=[28\cdot3,28]_v=[3,28]_v=1/3.$$

Case 3: $\mathfrak{p}$ divides $w+\sqrt{-3}x^3$. Note that 7 cannot divide $w-\sqrt{-3}x^3$ in this case. We carry out the same procedure as case 2 except now
\begin{align*}
f(P)&=\frac{w-\sqrt{-3}x^3}{w+\sqrt{-3}x^3}\\
&=\frac{(w-\sqrt{-3}x^3)^2}{(w-\sqrt{-3}x^3)(w+\sqrt{-3}x^3)}\\
&=\frac{(w-\sqrt{-3}x^3)^2}{97y^6+97\cdot28\cdot8z^6}\\
\end{align*}
Here again $w-\sqrt{-3}x^3\equiv 3,-3\mod 7$, so we get the inverse of the solutions from case 2. Thus
$$[f(P),28]_7=2/3.$$

Combining the above, we get that $\inv_p\calA(-)$ is 0 for all $p\neq 7$ and $\inv_7 \calA(-)$ has value $1/3$ or $2/3$. Hence we have a Brauer--Manin obstruction coming from $\calA$.\end{proof}

\begin{remark}\label{infrem} The proof only uses the congruence class of 97 modulo several primes. Hence one can obtain infinitely many examples as follows. For any prime $p$, let
$$X_p:= w^2=-3x^6+py^6+p\cdot28\cdot8z^6.$$
It is clear that $X_p$ has real points. Assume that $p$ is congruent to 97 modulo some sufficiently high enough power of every prime $q\leq23$ (The existence of infinitely many such $p$ follows from Dirichlet's theorem on primes in arithmetic progressions). This ensures that $X_p$ has local points for all primes $q\leq23$. Then the Weil conjectures give that for any prime $q\geq 23,q\neq p$, there is a smooth $\F_q$-point on the curve $\{w=0\}$ which can be lifted to a $\Q_q$-point. Lastly, $\sqrt{-3}\in\Q_{97},\Q_p$ since $97,p\equiv 1\pmod 3$, so clearly there are $\Q_{97}$ and $\Q_p$ points. Hence $X_p$ is everywhere locally soluble. Furthermore assume $p\equiv1\pmod 8$ to ensure that the invariant maps over $\Q_2$ will carry out same as in the proof. Then $X_p$ will be a counterexample to the Hasse principle given by a 3-torsion Brauer class in the algebraic Brauer group.\end{remark}

\bibliographystyle{alpha}
\bibliography{pat}

\end{document}